\newtheorem{theorem}{Theorem}[section]
\newtheorem{definition}[theorem]{Definition}
\newtheorem{lemma}[theorem]{Lemma}
\newtheorem{proposition}[theorem]{Proposition}
\newtheorem{remark}[theorem]{Remark}
\newtheorem{corollary}[theorem]{Corollary}
\newtheorem{fact}[theorem]{Fact}
\renewcommand{\leq}{\leqslant}
\renewcommand{\geq}{\geqslant}
\renewcommand\section{\@startsection {section}{1}{\z@}%
                               {-3.5ex \@plus -1ex \@minus -.2ex}%
                               {2.3ex \@plus.2ex}%
                               {\centering\normalfont\Large\bfseries}}
\renewcommand\subsection{\@startsection{subsection}{2}{\z@}%
                                 {-3.25ex\@plus -1ex \@minus -.2ex}%
                                 {1.5ex \@plus .2ex}%
                                 {\centering\normalfont\large\bfseries}}
\title{SRB measures for hyperbolic attractor in low regularity}
\author{Houssam BOUKHECHAM}
\date{25-05-2022}
\begin{document}
\maketitle
\begin{abstract}
    We consider a $C^1$ hyperbolic attractor, and prove the existence of a physic measure provided that the differential satisfies some summability condition which is weaker than Hölder continuity.
\end{abstract}

\section{Introduction}



Let $M$ be a compact Riemannian manifold, $f:M\to M$ be a $C^1$ diffeomorphism, and $\Lambda$ a closed $f$-invariant hyperbolic attractor (see section 2.1). Such map have a lot of invariant measures, however there is an interesting class called physique measures. If $f$ is $C^{1+\alpha},$ it is a classical result that $f$ has a unique physical measure (see \cite{Young2002SRBsurvey}). In this paper, we will be interested in the existence of a physical measure in a finer regularity. If $\omega:\mathbb{R}_+\to\mathbb{R}_+$ is a modulus of continuity, we say that $f$ is $C^{1,\omega}$ if $f$ is $C^1$ and the modulus of continuity of $df$ is a multiple of $\omega,$ i.e there is $C>0$ such that
\begin{equation*}
    \|df_x-df_y\|\leq C\omega\big(d(x,y)\big), \forall x,y\in M.
\end{equation*}
We say that a modulus $\omega$ is Dini summable if
\begin{equation*}
    \int_0^1\frac{\omega(t)}{t}~dt<+\infty.
\end{equation*}
Our main results extends the work of Fan and Jiang \cite{FanJiang} from expanding maps to hyperbolic diffeomorphisms.
\begin{theorem}\label{Physic}
If $\Lambda$ is a hyperbolic attractor of a $C^{1,\omega}$ diffeomorphism $f$, and $\omega$ is Dini summable, then $f$ has a unique physical measure.
\end{theorem}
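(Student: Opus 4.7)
The plan is to follow the classical Bowen--Sinai--Ruelle scheme for constructing SRB measures of Axiom A attractors, replacing the H\"older assumption on $df$ by the weaker Dini summability of $\omega$. Since $f$ is $C^1$ and $\Lambda$ is hyperbolic, I would first recall the continuous $df$-invariant splitting $T_\Lambda M = E^s \oplus E^u$ together with local stable and unstable manifolds $W^s_{loc}(x), W^u_{loc}(x)$, where $W^u_{loc}(x)$ is tangent to $E^u_x$ and is as smooth as $f$. Define the unstable Jacobian $\J^u f(x) = |\det(df_x|_{E^u_x})|$; this is continuous, and its restriction to any local unstable manifold inherits (a constant multiple of) $\omega$ as a modulus of continuity, once one checks that the distribution $x \mapsto E^u_x$ itself enjoys an $\omega$-type modulus of continuity under the $C^{1,\omega}$ hypothesis.

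To construct the candidate measure, fix a local unstable disk $D \subset W^u_{loc}(x_0)$ with $x_0 \in \Lambda$, let $m_D$ be normalised Lebesgue on $D$, and take a weak-$*$ limit $\mu$ of the Ces\`aro averages $\mu_n = \frac{1}{n}\sum_{k=0}^{n-1} f^k_* m_D$. Then $\mu$ is $f$-invariant and supported in $\Lambda$. The key technical estimate is the distortion bound: for $y,z$ on the same local unstable manifold, the chain rule and the modulus of continuity of $\log \J^u f$ give
\[
\Bigl| \log \tfrac{\J^u f^n(y)}{\J^u f^n(z)} \Bigr| \,\leq\, C \sum_{k=0}^{n-1} \omega\bigl(d(f^k y, f^k z)\bigr).
\]
Writing $y' = f^n y,\ z' = f^n z$ and using backward contraction on unstable manifolds, $d(f^k y, f^k z) \leq C' \lambda^{n-k}$ for some $\lambda \in (0,1)$, so the right-hand side is controlled by $\sum_{j \geq 1}\omega(\lambda^j)$, which is finite precisely by Dini summability (comparison with $\int_0^1 \omega(t)/t \, dt$). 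This yields uniform upper and lower bounds for the densities of $f^n_* m_D$ with respect to Lebesgue on $f^n(D)$, so the conditional measures of $\mu$ on local unstable manifolds are absolutely continuous with densities bounded away from $0$ and $\infty$; in other words, $\mu$ is SRB.

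To upgrade ``SRB'' to ``physical'' and obtain uniqueness, I would establish absolute continuity of the stable foliation: the holonomy sliding one local unstable disk onto another along $W^s$ has bounded Jacobian. This is the step I expect to be the main obstacle, since the classical proofs (Pesin, Brin--Stuck) invoke H\"older continuity of the invariant distributions, which need not hold here. The strategy would be to write the holonomy as a limit of graph transforms and bound its Jacobian by a telescoping series of $\omega(\lambda^j)$-type terms, using Dini summability to close the estimate, exactly as in the distortion argument above. Once absolute continuity is secured, a standard Hopf-type argument, combined with topological transitivity of the hyperbolic attractor, shows that Lebesgue-a.e.\ point in the basin of $\Lambda$ is $\mu$-generic and that $\mu$ is the unique ergodic SRB measure, proving both existence and uniqueness of the physical measure.
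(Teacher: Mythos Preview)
Your approach is viable but genuinely different from the paper's. You take the geometric Sinai--Pesin route: push Lebesgue forward from an unstable disk, use distortion to get SRB conditionals, then invoke absolute continuity of the stable holonomy and a Hopf argument. The paper instead follows the Bowen--Ruelle thermodynamic-formalism route and completely bypasses absolute continuity of the stable foliation. Concretely, the paper first proves (Theorem~\ref{main theorem}) that $E^u$ has a Dini-summable modulus of continuity, hence so does the geometric potential $\phi^{(u)}=-\log J^uf$; this is the non-trivial regularity result you allude to, and note that what one obtains is a Dini-summable modulus, not necessarily $\omega$ itself. Then the paper passes through a Markov partition to a subshift of finite type, where the Fan--Jiang Ruelle--Perron--Frobenius theorem for Dini potentials yields a unique equilibrium state $\mu_{\phi^{(u)}}$. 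Finally, the physical property is proved not via Hopf/holonomy but via two volume lemmas comparing $vol^m\bigl(B_n(x,\epsilon)\bigr)$ with $1/J^uf^n(x)$, the identity $P_{f|\Lambda}(\phi^{(u)})=0$ for attractors, a quasi-Gibbs lower bound $\mu_{\phi^{(u)}}\bigl(B_n(x,\epsilon)\bigr)\geq b_\epsilon\, e^{S_n\phi^{(u)}(x)}$, and a Borel--Cantelli-type covering argument showing that the set where Birkhoff averages fail to converge to $\int g\,d\mu_{\phi^{(u)}}$ has zero volume in $W^s_\epsilon(\Lambda)$.

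The trade-off is this: your route is more self-contained dynamically and gives directly the SRB structure of conditionals, but the step you flag as the main obstacle---absolute continuity of stable holonomies under a mere Dini hypothesis---is exactly what the paper's method avoids; the paper replaces it by volume lemmas plus the Gibbs property of the symbolic equilibrium state, at the cost of relying on Markov partitions and the Fan--Jiang transfer-operator theorem. Both schemes ultimately hinge on the same Dini summability to control sums of the form $\sum_j\omega(\lambda^j)$, so your distortion estimate and the paper's regularity theorem for $E^u$ are morally the same computation; the divergence is in how one converts that estimate into ``Lebesgue-a.e.\ point is $\mu$-generic.''
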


To prove this theorem, first, we study the modulus of continuity of the unstable distribution $E^u,$ and prove that

\begin{theorem}\label{main theorem}
Let $f:\Lambda\to\Lambda$ be a $C^{1,\omega}$ hyperbolic map, where $\omega$ is Dini summable modulus of continuity, then the unstable distribution has a Dini summable modulus of continuity.
\end{theorem}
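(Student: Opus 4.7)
The plan is to realize $E^u$ as the fixed point of a graph transform operator on sections of the Grassmannian bundle, derive a functional inequality controlling the modulus of continuity of that fixed point, and iterate it until the Dini summability of $\omega$ can be invoked.

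\textbf{Step 1 (setup).} I would fix an invariant unstable cone field $C^u$ on $\Lambda$, trivialize the Grassmannian bundle locally so that subspaces in $C^u$ are represented as graphs over a reference unstable direction, and consider the pullback operator $T$ defined on sections $\xi$ of this cone bundle by $T\xi(x)=Df_{f^{-1}(x)}\big(\xi(f^{-1}(x))\big)$. Standard hyperbolic theory for $C^1$ maps gives that $T$ acts as a fibrewise contraction on the Grassmannian with a uniform rate $c\in(0,1)$ (the ratio of maximal contraction to minimal expansion of $Df$ in an adapted metric) and that $E^u$ is its unique fixed point. Let $L\geq 1$ be a Lipschitz constant of $f^{-1}$.

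\textbf{Step 2 (functional inequality).} Writing $x'=f^{-1}(x)$, $y'=f^{-1}(y)$ and using $E^u=TE^u$, the triangle inequality on the Grassmannian fibre yields
\begin{equation*}
d(E^u(x),E^u(y))\leq d\big(Df_{x'}E^u(x'),Df_{x'}E^u(y')\big)+d\big(Df_{x'}E^u(y'),Df_{y'}E^u(y')\big).
\end{equation*}
The first term is $\leq c\,d(E^u(x'),E^u(y'))$ by the fibrewise contraction, and the second is $\leq C\|Df_{x'}-Df_{y'}\|\leq C'\omega(d(x',y'))\leq C'\omega(Lr)$ by the $C^{1,\omega}$ hypothesis. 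Taking the supremum over pairs $x,y$ with $d(x,y)\leq r$ gives
\begin{equation*}
\omega_{E^u}(r)\leq c\,\omega_{E^u}(Lr)+C'\omega(Lr),\qquad r\leq r_0.
\end{equation*}

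\textbf{Step 3 (iteration and Dini integration).} Iterating $n(r)\sim -\log r/\log L$ times until $L^{n(r)}r$ reaches the diameter of $\Lambda$, and setting $\alpha:=-\log c/\log L>0$, one obtains
\begin{equation*}
\omega_{E^u}(r)\lesssim r^\alpha+r^\alpha\int_r^1\frac{\omega(t)}{t^{1+\alpha}}\,dt,
\end{equation*}
where the geometric sum produced by the iteration is compared to the integral using monotonicity of $\omega$ and the fact that the $L^k r$ form a geometric progression. Dividing by $r$ and integrating, the $r^{\alpha-1}$ term is integrable, and after a Fubini swap the remainder gives
\begin{equation*}
\int_0^{r_0}r^{\alpha-1}\int_r^1\frac{\omega(t)}{t^{1+\alpha}}\,dt\,dr\leq\frac{1}{\alpha}\int_0^1\frac{\omega(t)}{t}\,dt<\infty
\end{equation*}
by the Dini summability of $\omega$, which proves that $\omega_{E^u}$ itself is Dini summable.

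The main obstacle is the quantitative implementation of Step 1: choosing an adapted Riemannian metric (or family of local charts) in which the graph transform is a genuine contraction with explicit rate $c$ and in which the comparison $d(Df_{x'}v,Df_{y'}v)\leq C\|Df_{x'}-Df_{y'}\|$ holds uniformly across fibres of the Grassmannian bundle. Once that technical machinery is in place, Steps 2 and 3 reduce to a direct bookkeeping of constants and a standard swap of integrals.
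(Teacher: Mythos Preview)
Your argument is correct, and it is a genuine alternative to the route taken in the paper.

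The paper does \emph{not} run a single global graph-transform iteration. Instead it splits the problem: along each local \emph{unstable} leaf the backward orbit contracts, so the telescoping sum $\sum_k (\delta\lambda)^{2k}\omega(\lambda^k d(x,y))$ converges geometrically and yields the stronger conclusion that $E^u$ is $C^{0,\omega}$ leafwise (Lemma~\ref{Regularity of unstable manifolds}); along each local \emph{stable} leaf the forward orbit contracts and one obtains a recursion of the form $\omega^u(\lambda_0^n t)\leq M_1(\delta\lambda)^{2n}\omega^u(t)+M_2\omega(\lambda^n t)$, from which Dini summability of $\omega^u$ follows. A bracket lemma (Lemma~\ref{split regularity}) then glues the two leafwise estimates into a global one. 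Your approach replaces this stable/unstable decomposition by a single backward iteration, paying the price that the base map $f^{-1}$ expands by $L$; the fibre contraction $c<1$ still wins, and your integral bound $\omega_{E^u}(r)\lesssim r^\alpha\big(1+\int_r^1 \omega(t)t^{-1-\alpha}\,dt\big)$ together with the Fubini computation is a clean way to conclude Dini summability. What you lose is the sharper leafwise regularity $E^u|_{W^u}\in C^{0,\omega}$, which the paper records but does not actually need downstream; what you gain is a shorter argument with no foliation geometry.

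One word of caution on Step~2: when you write $d\big(Df_{x'}E^u(x'),Df_{x'}E^u(y')\big)$ you are implicitly applying $Df_{x'}$ to a subspace of $T_{y'}M$, so the inequality ``first term $\leq c\,d(E^u(x'),E^u(y'))$'' only holds after trivializing and comparing the graph-metrics at nearby basepoints. This is exactly the content of Lemmas~\ref{Contraction of the action of hyperbolic maps}--\ref{Benoit approximation} in the paper (the constant $c$ becomes $(\delta\lambda)^2$ with $\delta\to1$ as the chart shrinks). You correctly identify this as the main technical obstacle; once it is handled, your Steps~2--3 go through as written.
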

Given this theorem, we deduce that the geometric potential
\begin{equation*}
\phi^{(u)}=-\log J^uf=-\log\det df_{|E^u}   \end{equation*} has a Dini summable modulus of continuity. Finally, using Markov partitions \cite{bowen1975equilibrium}, $f:\Omega(f)\to \Omega(f)$ is semiconjugated to a subshift of finite type. More precisely, there are $(\Sigma_A,\sigma)$ and a surjective Hölder map $\pi:\Sigma_A\to\Lambda$ such that $\pi\circ\sigma=f\circ\pi$, so if we take a potential with Dini summable modulus $\phi:\Lambda\to\mathbb{R},$ then $\pi\circ\sigma$ has a Dini summable modulus. To get an equilibrium state for $(\Sigma_A,\sigma,\pi\circ\phi)$ one can after several lemmas consider only the one-sided shift $(\Sigma_A^+,\sigma,\Tilde{\phi}),$ ( where $\Tilde{\phi}$ is a potential depending only on the future, and cohomologous to $\pi\circ\phi$ \cite{Polli1990Zeta}) which is an expanding map, then we apply the adapted Ruelle-Perron-Frobenius theorem \cite{FanJiang} to get an equilibrium measure for $(\sigma,\pi\circ\phi).$ We push this measure by $\pi$ to get an equilibrium measure for $\big(f_{\Omega(f)},\phi\big).$ 

\begin{corollary}
Let $f:\Lambda\to\Lambda$ be a $C^{1,\omega}$ hyperbolic map, where $\omega$ is a summable modulus of continuity, then the geometric potential $\phi^{(u)}$ has a unique 
equilibrium state $\mu_{\phi^{(u)}},$ in particular $\mu_{\phi^{(u)}}$ is ergodic. Furthermore, if $f$ is topologically mixing then the measure $\mu_{\phi^{(u)}}$ is Bernoulli.  
\end{corollary}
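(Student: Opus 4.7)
The plan is to follow the road map sketched in the introduction and reduce the problem to a one–sided subshift of finite type, where the adapted Ruelle–Perron–Frobenius theorem of Fan–Jiang \cite{FanJiang} applies. First I would use Theorem \ref{main theorem} to deduce that the unstable distribution $E^u$ has a Dini summable modulus of continuity; combined with the fact that $df$ itself has modulus $\omega$ and that $J^uf=\det df_{|E^u}$ is bounded away from $0$ on the compact attractor $\Lambda$, a direct computation shows that $\phi^{(u)}=-\log J^uf$ has a Dini summable modulus of continuity on $\Lambda$.

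Next I would bring in the Bowen–Sinai Markov partition machinery \cite{bowen1975equilibrium}: choose a Markov partition for $f_{|\Omega(f)}$ with sufficiently small diameter, obtaining a subshift of finite type $(\Sigma_A,\sigma)$ and a H\"older surjection $\pi:\Sigma_A\to\Omega(f)$ with $\pi\circ\sigma=f\circ\pi$. Since $\phi^{(u)}$ has a Dini summable modulus and $\pi$ is H\"older, the pullback $\phi^{(u)}\circ\pi:\Sigma_A\to\mathbb{R}$ inherits a Dini summable modulus of continuity with respect to the symbolic metric. I would then invoke Sinai's cohomology trick in the form used by Parry–Pollicott \cite{Polli1990Zeta} to replace $\phi^{(u)}\circ\pi$ by a cohomologous potential $\tilde\phi$ depending only on future coordinates, verifying that this trick preserves the Dini summability of the modulus (this amounts to controlling the series $\sum_n\omega_\phi(\theta^n)$ for the coboundary, which converges thanks to Dini summability).

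Working now on the one-sided subshift $(\Sigma_A^+,\sigma,\tilde\phi)$, I would apply the Ruelle–Perron–Frobenius theorem of Fan–Jiang to obtain a unique equilibrium state $\tilde\mu$ for $(\sigma,\tilde\phi)$; pulling back by the natural projection, lifting to $\Sigma_A$, and pushing forward by $\pi$ yields a measure $\mu_{\phi^{(u)}}$ on $\Lambda$ that is the unique equilibrium state of $(f,\phi^{(u)})$. Uniqueness on $\Lambda$ is then a standard consequence: any equilibrium state on $\Lambda$ lifts to an equilibrium state on $\Sigma_A$, which after cohomology and projection to $\Sigma_A^+$ must coincide with $\tilde\mu$. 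Ergodicity is automatic since the equilibrium state produced by the RPF theorem is mixing for the shift. For the last claim, if $f$ is topologically mixing, the associated subshift can be taken topologically mixing as well, and the Bernoulli property is obtained by Bowen's weak-Bernoulli argument and Ornstein theory, which only requires summability of the variations of $\tilde\phi$, a condition guaranteed by the Dini summable modulus.

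The main obstacle I anticipate is the second step: verifying that Sinai's coboundary construction, together with the H\"older semiconjugacy, actually preserves Dini summability (rather than merely continuity) of the modulus of the potential. The estimates on the telescoping sum defining the coboundary must be controlled uniformly in terms of $\omega$, and one needs the inequality $\sum_{n\geq 0}\omega(\lambda^n)<\infty$, which is exactly Dini summability combined with the hyperbolicity rate $\lambda<1$. Once this technical point is settled, the remaining steps are essentially bookkeeping around well-established results.
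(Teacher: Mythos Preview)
Your proposal is correct and follows essentially the same strategy as the paper: pull back the geometric potential to the symbolic model via the H\"older coding map, apply Fan--Jiang, push forward, and use a lifting lemma for uniqueness. The paper's written proof is a bit more compressed than yours---it cites Fan--Jiang directly on the two-sided shift (packaging the one-sided reduction and cohomology trick into that citation, as announced in the introduction) and derives ergodicity from uniqueness via the convexity of $\mu\mapsto h_\mu(f)+\int\phi^{(u)}\,d\mu$ rather than from mixing of the RPF measure---but the substance is the same, and your explicit identification of the Dini-summability preservation under Sinai's coboundary construction is exactly the technical point one must check.
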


The article is organized as follows. In section 2, we will recall uniform hyperbolicity and some classical tools; modulus of continuity, and equivalent formulation of the Dini summability condition. In section 3 we will recall some classical and motivational examples. In section 4 we give the proof of the regularity of the potential and proceed in the same way as in \cite{bowen1975equilibrium} to prove the existence of equilibrium measure. In section 5, we prove theorem \ref{Physic} by adapting some volume lemmas given in \cite{bowen1975equilibrium}.

\section{Preliminaries and notations}
\subsection{Uniform hyperbolicity}

Let $U$ be an open subset of $M$ and $f:U\to M$ a $C^1$ diffeomorphism. An invariant set $\Lambda\subset U$ is called hyperbolic if there are some $C>0$ and $\lambda\in (0,1)$ such that for all $x\in\Lambda$ we have a splitting of $T_xM=E^u_x\oplus E^s_x$ which is $f$ invariant, i.e $df_x(E^u_x)=E^u_{fx}$ and $df_x(E^s_x)=E^s_{fx}$ and such that
\begin{align}
    \|df^n(v)\|&\leq C\lambda^n\|v\|, \forall n\in\mathbb{N},~v\in E^s_x,
    \\\|df^{-n}(v)\|&\leq C\lambda^n\|v\|, \forall n\in\mathbb{N},~v\in E^u_x.
\end{align}

In the definition, we didn't assume any continuity on $E^s$ and $E^u$. In fact it is not hard to prove the continuity of $E^u$ and $E^s$ starting from the given definition. One can also assume that $C=1$ by considering another equivalent Riemannian metric on $M$, and taking $\lambda'\in(\lambda,1)$ (see Proposition 5.2.2 of \cite{BrinStuck}).

Some example of uniformly hyperbolic maps are: Arnold cat map, the Horseshoe, toral hyperbolic linear automorphism, the Smale solenoid.

A classical approach to deal with uniform hyperbolic maps, is to consider the space of continuous (resp. bounded) sections $\sigma:\Lambda\to T\Lambda,$ which is a Banach space, and once we have the first definition, we can write this Banach space as the direct sum of two closed subspaces, corresponding to sections with value on $E^u$ or $E^s$. Once we do this, we have a natural linear action of $f$ on that Banach space which preserves the closed subspaces. This approach helps us prove a lot of result like shadowing lemma, local stability, etc.

In general it is hard to check uniform hyperbolicity using this definition (for instance we don't know $E^u$ and $E^s$), to deal with this difficulty we study cones instead of linear subspaces.

\subsection{Hyperbolicity via cone techniques}
Let $x\in M$ and $E$ a linear subspace of $T_xM,$ define the cone centered at $E$ by
\begin{equation*}
    K_\alpha^E(x)=\left\lbrace v\in T_xM : \|v_2\|\leq \alpha\|v_1\| \text{ where } v=v_1+v_2 \text{ and } v_1\in E, v_2\in E^{\perp} \right\rbrace.
\end{equation*}
For a hyperbolic map $f,$ $K^{E^u}_\alpha$ (resp $K^{E^s}_\alpha$) is called unstable (resp stable) cone field. We say that it has a small angle if $\alpha$ is small.

A cone field $K$ on $M$ is said to be invariant by $f$ if for all $x\in M$
\begin{equation*}
    df_x\big(K(x)\big)\subset int\big(K(fx)\big) \cup \lbrace 0\rbrace.
\end{equation*}

\begin{proposition}(Proposition 5.4.3 \cite{BrinStuck})
Let $\Lambda$ be a compact invariant set of $f:U\to M.$ Suppose that there is $\alpha>0$ and for every $x\in\Lambda$ there are continuous subspaces $\Tilde{E}^s$ and $\Tilde{E}^u(x)$ such that $\Tilde{E}^s(x)\oplus\Tilde{E}^u(x)=T_xM,$ and the cone $K_\alpha^{\Tilde{E}^u}(x)$ and $K_\alpha^{\Tilde{E}^s}(x)$ are $f$ invariant and $\|df_xv\|< \|v\|$ for non zero $v\in K_\alpha^{\Tilde{E}^s}(x),$ and $\|df_x^{-1}v\|<\|v\|$ for non-zero $v\in K_\alpha^{\Tilde{E}^u}(x).$
Then $\Lambda$ is a hyperbolic set of $f$.
\end{proposition}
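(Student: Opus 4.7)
The strategy is to define the stable and unstable distributions at each $x \in \Lambda$ as nested intersections of iterated cone fields, and then verify in turn that these yield continuous, $df$-invariant subspaces satisfying the required exponential estimates. Concretely, the plan is to set
\[
E^u(x) = \bigcap_{n \geq 0} df^n_{f^{-n}x}\bigl(K_\alpha^{\tilde{E}^u}(f^{-n}x)\bigr),
\qquad
E^s(x) = \bigcap_{n \geq 0} df^{-n}_{f^{n}x}\bigl(K_\alpha^{\tilde{E}^s}(f^{n}x)\bigr).
\]
The forward invariance of the unstable cone (and the symmetric backward invariance of the stable cone, which is how the hypothesis must be read on the stable side) makes each family nested in $n$, so each intersection is a priori a closed cone at $x$.

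A preliminary step is to upgrade the strict pointwise inequalities in the hypothesis to uniform estimates, using the compactness of $\Lambda$ and the continuity of $\tilde{E}^u$, $\tilde{E}^s$, and $df$. The strict inclusion $df_x(K_\alpha^{\tilde{E}^u}(x)) \subset \mathrm{int}\,K_\alpha^{\tilde{E}^u}(fx) \cup \{0\}$ sharpens to inclusion in $K_{\alpha'}^{\tilde{E}^u}(fx)$ for some fixed $\alpha' < \alpha$, and analogously on the stable side; the strict contraction and expansion likewise become uniform bounds $\|df_x v\| \leq \lambda \|v\|$ on $K_\alpha^{\tilde{E}^s}(x)$ and $\|df_x^{-1} v\| \leq \lambda \|v\|$ on $K_\alpha^{\tilde{E}^u}(x)$ for some fixed $\lambda \in (0,1)$.

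The main obstacle is showing that $E^u(x)$ is genuinely a linear subspace of dimension $k := \dim \tilde{E}^u(x)$ (and symmetrically for $E^s$). This is the classical cone-collapsing argument, which I would phrase as a graph transform. Parameterizing vectors in $K_\alpha^{\tilde{E}^u}(y)$ as graphs $v_1 + L v_1$ with $L \colon \tilde{E}^u(y) \to (\tilde{E}^u(y))^\perp$ satisfying $\|L\| \leq \alpha$, the map $df_y$ acts on slopes by $L \mapsto (B + DL)(A + CL)^{-1}$, where $A, B, C, D$ are the blocks of $df_y$ in the decomposition $\tilde{E}^u \oplus (\tilde{E}^u)^\perp$. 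The uniform estimates make this a uniform contraction on the slope ball of radius $\alpha$, so the cones $\mathcal{C}_n := df^n_{f^{-n}x}\bigl(K_\alpha^{\tilde{E}^u}(f^{-n}x)\bigr)$ form a Cauchy sequence (in the appropriate Hausdorff sense on cones) that shrinks geometrically to a single $k$-dimensional linear subspace, which is $E^u(x)$. Continuity of $x \mapsto E^u(x)$ then follows from the uniform convergence of the $\mathcal{C}_n$ together with continuity of $\tilde{E}^u$ and $df$.

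The remaining verifications are formal. The equality $df_x E^u_x = E^u_{fx}$ is immediate from the definition by reindexing $n$. The splitting $T_x M = E^u_x \oplus E^s_x$ follows because $E^u_x \subset K_\alpha^{\tilde{E}^u}(x)$ and $E^s_x \subset K_\alpha^{\tilde{E}^s}(x)$, and these cones are transverse by the hypothesis $\tilde{E}^u \oplus \tilde{E}^s = TM$ together with a short cone-angle argument. For the exponential estimates, a vector $v \in E^s_x$ lies in $K_\alpha^{\tilde{E}^s}(f^k x)$ at every forward iterate by construction, so the uniform contraction applies at each step and yields $\|df^n v\| \leq \lambda^n \|v\|$ with $C=1$; the argument for $E^u$ under $df^{-1}$ is symmetric. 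This establishes the hyperbolic splitting with the claimed constants.
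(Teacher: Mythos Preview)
The paper does not actually prove this proposition: it is stated with a citation to Brin--Stuck (Proposition~5.4.3) and no argument is given in the text. So there is no ``paper's own proof'' to compare against.

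That said, your proposal is the standard cone-collapsing argument and is essentially the proof one finds in Brin--Stuck. The key steps --- upgrading strict inequalities to uniform ones by compactness, showing the iterated cones shrink to a single subspace via a contraction on slopes (graph transform), and then reading off invariance, transversality, and the exponential estimates --- are all correct and in the right order. One small remark: the transversality of $E^u_x$ and $E^s_x$ deserves a word beyond ``a short cone-angle argument,'' since the cones $K_\alpha^{\tilde E^u}(x)$ and $K_\alpha^{\tilde E^s}(x)$ are defined relative to possibly non-orthogonal complements; you need that for $\alpha$ small relative to the angle between $\tilde E^u$ and $\tilde E^s$ (uniform over $\Lambda$ by compactness and continuity) the two cones meet only at the origin. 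Otherwise the sketch is complete.
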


\subsection{Local stable and unstable manifolds:}
Define the local stable and unstable manifolds for $x\in\Lambda$ by

\begin{align*}
W^s_\epsilon(x)&=\lbrace y\in M~|~ d(f^kx,f^ky)\leq \epsilon, \forall k\geq 0\rbrace,\\
W^u_\epsilon(x)&=\lbrace y\in M~|~ d(f^{-k}x,f^{-k}y)\leq \epsilon, \forall k\geq 0\rbrace.
\end{align*}
The definition of stable and unstable manifolds is dynamic, and the theorem of Perron-Hadamard proves that $W^u_{\epsilon}$ and $W^s_{\epsilon}$  are sub-manifolds for $\epsilon$ small enough.

Let's recall some classical definitions:\\
\textit{Pseudo-orbit:}
Let $f:X\to X$ a homeomorphism of a metric space, and let $\epsilon>0.$ We say that a sequence $(x_n)_{n\in\mathbb{Z}}$ is a $\epsilon$-pseudo-orbit if $d(fx_n,x_{n+1})<\epsilon.$\\\\
\textit{Shadowing lemma:} 
We say that a homeomorphism $f:X\to X$ have the shadowing property if for all $\epsilon>0$ there is $\delta>0$ such that for all $\epsilon$-pseudo-orbit $(x_n)_{n\in{\mathbb{Z}}}$ there is $y\in X$ such that for all $n\in\mathbb{Z}$ we have 
$d(f^ny,x_n)\leq \delta.$\\
It is known that Anosov diffeomorphisms and Axiom A diffeomorphisms have the shadowing property, in fact they have a stronger property called specification.\\\\
\textit{Expansiveness:}
A homeomorphism $f:X\to X$ is expansive if there is $\epsilon_0>0$ such that for all $x,y\in X$ there is $n\in\mathbb{Z}$ such that $d(f^nx,f^ny)\geq \epsilon_0.$ For instance an isometry is not expansive, and hyperbolic maps are expansive.\\\\
\textit{Wandering set:} $x$ is non-wandering, if for all neighborhood $U$ of $x$ there is $n>0$ such that $U\cap f^nU\neq\emptyset.$ we denote the wandering set of a diffeomorphism $\Omega(f).$\\\\
\textit{Axiom A diffeomorphism:} $f$ is an Axiom A diffeomorphism if $\Omega(f)$ is hyperbolic, and periodic orbits are dense in $\Omega(f).$
\begin{remark}
Local stable and unstable manifolds are used to construct Markov partitions, then shadowing, expansiveness 
and density of periodic orbits are used to prove that an Axiom A diffeomorphism is semiconjugated to a subshift of 
finite type via a Hölder map (see \cite{bowen1975equilibrium}).
\end{remark} \textit{Spectral Decomposition:}(Proposition 3.5 \cite{bowen1975equilibrium}) Let $f$ be an Axiom A diffeomorphism, then one can write $\Omega(f)=\Omega_1\cup\cdots\cup\Omega_s$
where the $\Omega_i$ are pairwise disjoint closed sets (called basic sets) such that
\begin{enumerate}
    \item $f(\Omega_i)=\Omega_i$ and $f_{|\Omega_i}$ is topologically transitive;
    \item $\Omega_i=X_{1,i}\cup\cdots X_{n_i,i}$ with $X_{j,i}$'s pairwise disjoint closed sets, $f(X_{j,i})=X_{j+1,i}$ ($X_{n_j+1,i}=X_{1,i}$) and $f^{n_i}_{|X_{j,i}}$ is topologically mixing.
\end{enumerate}
\begin{remark}
Using the spectral decomposition theorem, one can assume without loss of generality that an Axiom A diffeomorphism is transitive.
\end{remark}
\textbf{Physical measure}:
Let $f:M\to M$ be a continuous map, $X$ a closed $f$-invariant subset of $M$ and $\mu$ a $f$-invariant probability measure with support in $X.$ Define the basin of the measure $\mu$ by:
\begin{equation*}
    B_\mu=\left\lbrace x\in M~ \Bigg| ~\forall g\in C^0(M,\mathbb{R}), \lim\limits_{n\to\infty}\frac{1}{n}\sum\limits_{k=0}^{n-1}g\circ f^k(x)=\int_{\Lambda} g~d\mu\right\rbrace.
\end{equation*}
We say that $\mu$ is physical if $B_\mu$ has positive measure with respect to the Lebesgue measure of $M.$\\\\
\textit{Attractor:}
A basic set $\Omega_s$ is called an attractor if it has a neighborhood $U$ such that $f(U)\subset U.$

\subsection{Grassmanian bundle}

Let $G^q(M)=\bigsqcup\limits_{x\in M}G(q,T_xM)$ be the fiber bundle over $M$ with fiber $G(q,T_xM)$ (the set of subspaces of $T_xM$ of dimension $q)$.
A continuous distribution $E$ is a continuous section of $G^q(M),$ the latter has a Riemannian metric, so one can talk about the modulus of continuity of a distribution or the distance $d_{grass}$ between distributions that comes from the Riemannian structure.



Consider a continuous distribution $E.$ Let $x_0\in M,$ and consider a chart $\psi:U\to\mathbb{R}^m,$ where $U$ is a small neighborhood of $x_0$ such that for all $x\in U,$ $E(x)$ is sufficiently close to $E(x_0)$ and $d\psi_{x_0}E(x_0)=\mathbb{R}^q\times\lbrace 0\rbrace,$ where $q=\dim E.$ Define the distance $d=d_{x_0,\psi,E(x_0)}$ on a small neighborhood $\Tilde{U}$ of $(x_0,E_0)$ in $G^q(M)$ by:
\begin{equation*}
d(F_x^1,F_y^2):=d(x,y)+\|L_{F_x^1}-L_{F_y^2}\|,
\end{equation*}
where $L_{F_x^1}:\mathbb{R}^q\to\mathbb{R}^{m-q}$ (resp $L_{F_y^2}$) is the linear map whose graph is $d\psi_{x}(F_x^1)$ \big(resp $d\psi_{y}(F_y^2)$\big). This distance induces locally the usual topology of $G^q(M).$





\subsection{Pressure and equilibrium measure}

In this part, let $(X,d)$ be a compact metric space, $f:X\to X$ a continuous function, and $\phi:X\to \mathbb{R}$ a potential. Define the dynamical distance $d_n$ for $n\in\mathbb{N}$ and $x,y\in X$ by:
\begin{equation*}
    d_n(x,y)=\sup\limits_{0\leq k<n}d(f^kx,f^ky).
\end{equation*}
We denote by $B_n(x,\epsilon)$ the ball of center $x$ and radius $\epsilon$ with respect to the distance $d_n.$

A set $A$ is called $(n,\epsilon)$-spanning \big(resp. $(n,\epsilon)$-separated\big) if $X=\bigcup\limits_{x\in A}B_n(x,\epsilon)$ (resp. for any $x,y\in A$ we have $B_n(x,\epsilon)\cap B_n(y,\epsilon)=\emptyset).$ 

Consider the two following numbers that depend on $f,\phi,\epsilon$ and $n\geq 1$
\begin{align*}
    Q_n(f,\phi,\epsilon)&=\inf\left\lbrace \sum\limits_{x\in A}e^{(S_nf)(x)} : A \text{ is a } (n,\epsilon)-\text{spanning set for }X\right\rbrace,\\
     P_n(f,\phi,\epsilon)&=\sup\left\lbrace \sum\limits_{x\in A}e^{(S_nf)(x)} : A \text{ is a } (n,\epsilon)-\text{separated set for }X\right\rbrace,
\end{align*}
where $(S_nf)(x)=\sum\limits_{k=0}^{n-1}\phi \circ f^k(x).$ If $\phi$ is continuous, then
$$\lim\limits_{\epsilon\to 0}\limsup\limits_{n\to\infty}\frac{1}{n}\log\big(Q_n(f,\phi,\epsilon)\big)$$ exists, is finite and is equal to  
$$\lim\limits_{\epsilon\to 0}\limsup\limits_{n\to\infty}\frac{1}{n}\log\big(P_n(f,\phi,\epsilon)\big).$$ The limit is called the topological pressure with respect to the potential $\phi$, we denote it by $P(\phi).$

Denote by $\mathcal{M}_f(X)$ the space of $f$-invariant probability measures on $X$.
Let $\mu\in\mathcal{M}_f(X)$, then define the pressure with respect to this measure by:
\begin{equation*}
    P_\mu(\phi)=h_\mu(f)+\int_X \phi~d\mu,
\end{equation*}
where $h_\mu(f)$ is the entropy of $f$ with respect to the measure $\mu.$ The variational principle (see theorem 9.10 in \cite{WaltersIntrErgodic}) gives the following formula:
\begin{equation*}
    P(\phi)=\sup\left\lbrace h_\mu(f)+\int \phi~d\mu : \mu \in \mathcal{M}_f(X)\right\rbrace. 
\end{equation*}
If $\mu\in\mathcal{M}_f(X)$ is such that $P_\mu(\phi)=P(\phi),$ then $\mu$ is called an equilibrium measure for the potential $\phi.$

\subsection{Modulus of continuity}
In this section, we give a formal definition of the summability condition introduced in the abstract. We start by recalling the classical definition of a modulus of continuity.

\begin{definition}
A modulus of continuity is a continuous, increasing and concave map $\omega:\mathbb{R}^+\to\mathbb{R}^+$, such that $\omega(0)=0$.

We say that the modulus $\omega$ is Dini summable if $$\int_0^1\frac{\omega(t)}{t}dt<+\infty.$$

\end{definition}

For instance, for any $\alpha\in (0,1),$ the map $\omega(t)=t^\alpha$ is a modulus of continuity which is Dini summable. We assumed the concavity condition in the definition of a modulus of continuity for technical issues (see next proposition and lemma \eqref{Summable modulus majoration}), but note that any uniformly continuous map admits a concave modulus of continuity (See the end of this section).

The following proposition gives an equivalent condition on a modulus $\omega$ to be Dini summable. 

\begin{proposition}
The following conditions are equivalent:
\begin{itemize}
    \item $\omega$ is Dini summable.
    \item $\forall c\in (0,1)$ and $t\geq 0$, $\sum\limits_{k=0}^{+\infty}\omega(c^kt)<+\infty$
    \item $\exists c\in (0,1)$ and $t> 0$, $\sum\limits_{k=0}^{+\infty}\omega(c^kt)<+\infty$
\end{itemize}
\end{proposition}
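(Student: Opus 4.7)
The plan is to show the cyclic chain $(1)\Rightarrow(2)\Rightarrow(3)\Rightarrow(1)$, where $(1)$ is Dini summability, $(2)$ is summability of $\sum_k \omega(c^k t)$ for all $c\in(0,1)$ and all $t$, and $(3)$ is the same but for some particular $c$ and $t$. The implication $(2)\Rightarrow(3)$ is immediate, so the real content lies in comparing the Dini integral with a geometric sampling of $\omega$.

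For $(1)\Rightarrow(2)$, fix $c\in(0,1)$ and $t>0$. I would exploit monotonicity on the dyadic-like intervals $[c^{k+1}t,c^k t]$: since $\omega$ is increasing, $\omega(c^{k+1}t)\leq \omega(s)$ for $s\in[c^{k+1}t,c^k t]$, hence
\begin{equation*}
\omega(c^{k+1}t)\,\log(1/c)=\omega(c^{k+1}t)\int_{c^{k+1}t}^{c^k t}\frac{ds}{s}\leq \int_{c^{k+1}t}^{c^k t}\frac{\omega(s)}{s}\,ds.
\end{equation*}
Summing over $k\geq 0$ gives $\sum_{k\geq 1}\omega(c^k t)\leq \frac{1}{\log(1/c)}\int_0^t \frac{\omega(s)}{s}\,ds$, and adding the single term $\omega(t)$ finishes the bound. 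To justify that $\int_0^t$ is finite for any $t$ (not just $t\leq 1$), I would use concavity of $\omega$ with $\omega(0)=0$, which forces $\omega(s)/s$ to be non-increasing, hence $\omega(s)\leq \omega(1)s$ for $s\geq 1$, so the tail $\int_1^t \omega(s)/s\,ds$ is bounded by $\omega(1)(t-1)$.

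For $(3)\Rightarrow(1)$, suppose $\sum_{k\geq 0}\omega(c^k t_0)<+\infty$ for some fixed $c\in(0,1)$ and $t_0>0$. The argument is symmetric: since $\omega$ is increasing, $\omega(s)\leq \omega(c^k t_0)$ for $s\in[c^{k+1}t_0,c^k t_0]$, whence
\begin{equation*}
\int_{c^{k+1}t_0}^{c^k t_0}\frac{\omega(s)}{s}\,ds\leq \omega(c^k t_0)\,\log(1/c).
\end{equation*}
Summing over $k\geq 0$ yields $\int_0^{t_0}\omega(s)/s\,ds\leq \log(1/c)\sum_{k\geq 0}\omega(c^k t_0)<+\infty$. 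To pass from $\int_0^{t_0}$ to $\int_0^1$, I split into cases $t_0\geq 1$ (trivial) and $t_0<1$, in which case $\int_{t_0}^1 \omega(s)/s\,ds$ is finite because the integrand is continuous on a compact interval bounded away from zero.

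The whole argument is essentially a Cauchy condensation-type comparison between the integral $\int_0^1 \omega(t)/t\,dt$ and the geometric sum $\sum \omega(c^k t)$; the only subtlety — and arguably the main technical point worth stating carefully — is handling the ranges of $t$ (resp. $t_0$) where the interval of integration exceeds $[0,1]$, which is where concavity of $\omega$ enters to control the growth at infinity.
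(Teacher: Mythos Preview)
Your proof is correct and follows essentially the same Cauchy-condensation idea as the paper: compare $\int_0^t \omega(s)/s\,ds$ with $\sum_k \omega(c^k t)$ via the geometric partition $\{[c^{k+1}t,\,c^k t]\}_{k\ge 0}$. The only cosmetic difference is that the paper bounds the integrand using monotonicity of $s\mapsto \omega(s)/s$ (a consequence of concavity) to obtain a two-sided Riemann-sum estimate in one line, whereas you bound $\omega(s)$ directly using monotonicity of $\omega$ and invoke concavity only to handle the tail $\int_1^t$ when $t>1$.
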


\begin{proof}
Since $\omega$ is concave, the map $t\mapsto\frac{\omega(t)}{t}$ is decreasing, hence we have the following inequalities for all $n$ and small $t$:
\begin{equation}
    \sum\limits_{k=0}^{n-1}(c^k-c^{k+1})\frac{\omega(c^kt)}{c^kt}\leq \int_{c^nt}^t\frac{\omega(x)}{x}dx\leq \sum\limits_{k=0}^{n-1}(c^k-c^{k+1})\frac{\omega(c^{k+1}t)}{c^{k+1}t}.
\end{equation}
We deduce the proposition from these inequalities.
\end{proof}

Let $\omega$ be a Dini summable modulus, and for $c\in (0,1)$ define
\begin{equation}\label{Derivative of a modulus}
    \tilde{\omega}_c(t)=\sum\limits_{k=0}^{+\infty}\omega(c^kt), \forall t\geq 0.
\end{equation}

It follows immediately that $\tilde{\omega}_c$ is a modulus of continuity.
\begin{definition}
If $\omega$ is a Dini summable modulus, we denote $\tilde{\omega}$ the modulus defined by $$\tilde{\omega}(t)=\int_0^t\frac{\omega(s)}{s}ds.$$
\end{definition}
Using the previous proposition, the modulus $\tilde{\omega}$ and $\omega_c$ are equivalent for any $c\in (0,1)$ i.e there is $C>1$ such that:
$$C^{-1}\tilde{\omega}_c\leq \tilde{\omega}\leq C\tilde{\omega}_c.$$

\begin{remark}
The Dini summability condition might seem artificial, but it becomes more natural once we see in \cite{FanJiang} how it is used to prove Ruelle theorem for transfer operator.
\end{remark}

\textbf{Examples:}
\begin{itemize}
    \item For $\alpha\in (0,1],$ $\omega(t)=t^{\alpha}$ is Dini summable, because $\tilde{\omega}(t)=\frac{1}{\alpha}\omega(t).$
    \item The modulus $\omega_{\beta\log}(t)=\frac{1}{\big(\log(\frac{1}{t})\big)^{\beta}}$ is Dini summable if and only if $\beta>1$. In this example $\omega$ is defined only for small $t$, then we extend it by an affine map. 
\end{itemize}

\begin{definition}
Let $X,Y$ be two metric spaces, and $\omega$ a modulus of continuity, we say that a map $f:X\to Y$ is $C^{0,\omega}$ if there is a $C>0$ such that:
\begin{equation}\label{modulus of continuity}
    d\big(f(x),f(y)\big)\leq C\omega\big(d(x,y)\big),\quad\forall x,y\in X.
\end{equation}
\begin{itemize}
    \item If $\omega(t)=t$, then $C^{0,\omega}$ is the set of Lipschitz maps.
    \item If $\omega(t)=t^{\alpha}$, where $0<\alpha<1$, then $C^{0,\omega}$ is the set of Hölder maps with exponent $\alpha$.
\end{itemize}
\end{definition}

Given a continuous map $g:M\to M$ of a compact manifold, a natural way to define the modulus of continuity of $g$ would be to take: 
\begin{equation}
    \Tilde{\omega}_g(t)=\sup\limits_{\underset{d(x,y)\leq t}{x,y\in M}}d(gx,gy),
\end{equation}

but $\Tilde{\omega}_g$ is not concave. To get concavity, we take:
\begin{equation}
    \omega_g=\inf\lbrace h~|~ h\text{ continuous, concave and increasing and } h\geq\Tilde{\omega}_g\rbrace. 
\end{equation}
It is clear that $\omega_g$ is a modulus of continuity, and it satisfies the inequality (\ref{modulus of continuity}) with constant $C=1$.

\section{Motivating example: A Horseshoe with Positive Measure}

In this section we will see the importance of the summability condition by comparing proposition 
\ref{Attractor different formulation} and the example given in \cite{bowen1975horseshoe}.


Let $I=[a,b]$ be a closed interval, and $(\alpha_n)$ a sequence of 
positive numbers such that $\sum\limits_{n=0}^{\infty}\alpha_n<l(I).$ Let $\underline{a}=a_1a_2\cdots a_n$ denote a sequence of 
$0$'s and $1$'s of length $n=n(\underline{a}).$ Define $I_{\emptyset}=I, I^*_{\emptyset}=\left[\frac{a+b}{2}-\frac{\alpha_0}{2},
\frac{a+b}{2}+\frac{\alpha_0}{2}\right]$ and $I^*_{\underline{a}}\subset I_{\underline{a}}$ recursively as follows. Let 
$I_{\underline{a}0}$ and $I_{\underline{a}1}$ be the left and right intervals remaining when the interior of $I_{\underline{a}}^*$ is removed from $I_{\underline{a}}.$ Let $I^*_{\underline{a}k} (k=0,1)$ be the closed interval of length $
\frac{\alpha_{n(\underline{a}k)}}{2^{n(\underline{a}k)}}$ and having the same center as $I_{\underline{a}k}.$ The set 
$K_I=\bigcap\limits_{m=0}^{\infty}\bigcup\limits_{n(\underline{a})=m}I_{\underline{a}}$ is the standard Cantor set, and by 
construction the Lebesgue measure of $K_I$ is $l(I)-\sum\limits_{n=0}^{\infty}\alpha_n.$ Let $J$ be another interval, and $(\beta_n)$ be a sequence of positive numbers. We construct $J_{\underline{a}}, J_{\underline{a}}^*$ and $K_J$ as above. Let us take $\beta_n=\frac{1}{(n+10)^2}, \alpha_n=\frac{\beta_{n+1}}{2},$ $\delta_n=2\frac{\beta_{n}}{\beta_{n+1}}-2, I=\left[\frac{\beta_0}{2},1\right]$ and $J=[-1,1].$ For each $\underline{a}$ let $g:I_{\underline{a}}^*\to J_{\underline{a}}^*$ be a $C^1$ diffeomorphism so that
\begin{itemize}
\item[i.] $g'(x)=2$ for $x$ and endpoint of $I_{\underline{a}}^*,$
\item[ii.] $2-\delta_n\leq g'(x)\leq \frac{\beta_n}{\alpha_n}+\delta_n$ for $x\in I_{\underline{a}}^*.$ 
\end{itemize} 
Then $g$ extends from $\bigcup\limits_{\underline{a}}I_{\underline{a}}^*$ to a homeomorphism $g:I\to J;$ $g$ is in fact a $C^1$ diffeomorphism with derivative $2$ at each point of $K_I.$ One defines a diffeomorphism $f$ of the square $S=J\times J$ into $\mathbb{R}^2$ by
\begin{itemize}
\item[i.] $f(x,y)=\big(g(x),g^{-1}(y)\big) \text{ for } (x,y)\in I\times J,$
\item[ii.] $f(x,y)=\big(g(-x),-g^{-1}(y)\big)$ for $(x,y)\in (-I)\times J$ and \\
$f(T)\cap(J\times J)=\emptyset$ where $T=\left(-\frac{\beta_0}{2},\frac{\beta_0}{2}\right)\times J.$
\end{itemize}
The mapping $f$ can be extended to the sphere. Then $\Lambda=\bigcap\limits_{n=-\infty}^{+\infty}f(S)=K_J\times K_J$ has Lebesgue measure $\big(2-\sum\limits_{n=0}^{\infty}\beta_n\big)^2>0.$ The modulus of continuity of $g'$ does not satisfy Dini condition, in particular the modulus of continuity of $df$ does not satisfy Dini condition. This hyperbolic horseshoe has a positive measure but it is not an attractor. Proposition \ref{Attractor different formulation} gives a sufficient condition on the regularity of the hyperbolic map so that we have an equivalence between $\Lambda$ being an attractor and $W^s(\Lambda)$ having a positive Lebesgue measure.

\textbf{Modulus of continuity of $g':$}
Let $\omega$ be the modulus of continuity of $g'$, then we have for all $x\in I,$ and $x_0\in K_I$
\begin{equation}
    |g'(x)-g'(x_0)|\leq \omega\big(d(x,x_0)\big).
\end{equation}
Consider some interval $I^*_{\underline{a}},$ where length$(\underline{a})=n$, since $g$ send $I^*_{\underline{a}}$ of length $\frac{\alpha_n}{2^n}$ to some interval of length $\frac{\beta_n}{2^n}$, there is $x\in I^*_{\underline{a}}$ such that $g(x)\geq 2\frac{\beta_n}{\beta_{n+1}},$ and if we take $x_0$ in the boundary of $I^*_{\underline{a}}$ then $g(x_0)=2$, we deduce that:
$$\delta_n= 2\frac{\beta_n}{\beta_{n+1}}-2 \leq |g(x)-g(x_0)| \leq \omega\big(\frac{1}{2^n}\big), $$
which implies that:
\begin{equation}
    \delta_n\leq \omega(\frac{1}{2^n}), \forall n\in\mathbb{N},
\end{equation}
but it is clear that $\sum\limits_{k\geq 1}\delta_n =+\infty,$ which implies that the modulus does not satisfy Dini condition.

\begin{remark}[Perturbation of hyperbolic toral automorphisms]
We can construct an example of a diffeomorphism $f$ close to the identity, with derivative Dini summable modulus, but which is not Hölder continuous either by perturbing a toral hyperbolic linear automorphism $A$, or by considering a diffeomorphism $g$ close to the identity with derivative having a Dini summable modulus, then considering the map $A^n\circ g\circ A^n$ which is hyperbolic for large $n$ (using the cone field criterion one can prove the hyperbolicity of such a map).
\end{remark}



\section{Proof of theorem \ref{main theorem}}

The strategy to prove theorem \ref{main theorem} is to study the regularity of the unstable distribution restricted to a stable leaf, then determining the regularity of unstable leafs, then deducing by lemma \ref{split regularity} the desired regularity of the unstable distribution.

Let $A:\mathbb{R}^n\to\mathbb{R}^n$ be a hyperbolic linear map, and consider the set $\xi_s$ defined by $$\xi_s=\lbrace E\in G^q\mathbb{R}^n~|~ E\oplus E^s=\mathbb{R}^n\rbrace,$$ where $E^s$ is the stable subspace of $A$. Consider the distance $d_{E^u\oplus E^s}$ on $\xi_s$ given by $$d_{E^u\oplus E^s}(E,F)=\|L_E-L_F\|, ~E,F\in \xi_s$$ where $L_E:E^u\to E^s$ (resp $L_F$) is the unique linear map whose graph is $E$ (resp $F$), and $\parallel\cdot\parallel$ is any operator norm on the space of linear maps from $E^u$ to $E^s$ (See Chapter 4.3 of \cite{ManeSmooth}). 

The following lemma states that this distance is contracted under the action of $A.$
\begin{lemma}\label{Contraction of the action of hyperbolic maps}
We have $d_{E^u\oplus E^s}(AE,AF)\leq\lambda^2d_{E^u\oplus E^s}(E,F),~\forall E,F\in \xi_s,$ where $\lambda\in (0,1)$ is a hyperbolicity constant of $A$.
\end{lemma}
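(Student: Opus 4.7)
The plan is to make the action of $A$ on the graph coordinate $L_E$ completely explicit, and then read off the contraction from the two hyperbolicity bounds. Since $A$ preserves the splitting $\mathbb{R}^n = E^u \oplus E^s$, we can write $A = A_u \oplus A_s$ with $A_u = A|_{E^u}$ and $A_s = A|_{E^s}$. After possibly replacing the Riemannian metric by an adapted one (as allowed in section 2.1), the hyperbolicity constant $\lambda$ gives $\|A_s\| \leq \lambda$ and $\|A_u^{-1}\| \leq \lambda$.

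The key computation is to identify $L_{AE}$ in terms of $L_E$. If $E \in \xi_s$ is the graph of $L_E : E^u \to E^s$, then
\begin{equation*}
E = \{v + L_E v : v \in E^u\}, \qquad AE = \{A_u v + A_s L_E v : v \in E^u\}.
\end{equation*}
Substituting $w = A_u v \in E^u$ reparametrizes $AE$ as $\{w + A_s L_E A_u^{-1} w : w \in E^u\}$, so by uniqueness of the graph representative,
\begin{equation*}
L_{AE} = A_s \, L_E \, A_u^{-1}.
\end{equation*}
This formula is linear in $L_E$, hence $L_{AE} - L_{AF} = A_s (L_E - L_F) A_u^{-1}$.

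Taking the operator norm and using submultiplicativity together with the two hyperbolicity bounds above gives
\begin{equation*}
\|L_{AE} - L_{AF}\| \leq \|A_s\| \cdot \|L_E - L_F\| \cdot \|A_u^{-1}\| \leq \lambda^2 \|L_E - L_F\|,
\end{equation*}
which is exactly the claimed inequality $d_{E^u\oplus E^s}(AE, AF) \leq \lambda^2 \, d_{E^u \oplus E^s}(E, F)$.

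There is no real obstacle: the only point that deserves care is that we are entitled to use the adapted metric (so that $C = 1$ in the hyperbolicity estimates), and that the operator norm on $\mathrm{Hom}(E^u, E^s)$ is chosen compatibly with the norms on $E^u$ and $E^s$ induced by this metric, so that the submultiplicative bound $\|A_s L A_u^{-1}\| \leq \|A_s\|\,\|L\|\,\|A_u^{-1}\|$ really involves the same $\lambda$ that controls the hyperbolicity.
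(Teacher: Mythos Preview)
Your proof is correct and follows essentially the same approach as the paper: both derive the formula $L_{AE}=A|_{E^s}\,L_E\,A^{-1}|_{E^u}$ and then apply submultiplicativity of the operator norm together with the bounds $\|A|_{E^s}\|\leq\lambda$ and $\|A^{-1}|_{E^u}\|\leq\lambda$. Your write-up is slightly more detailed in justifying the graph formula and the use of the adapted metric, but the argument is the same.
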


\begin{proof}
We have by definition $L_{AE}=A\circ L_E\circ A^{-1}_{|E^u},$ so
\begin{align*}
    d_{E^u\oplus E^s}(AE,AF)=\|L_{AE}-L_{AF}\|&=\|A(L_E-L_F)A^{-1}_{|E^u}\|\\
    &\leq \parallel A_{|E^s}\parallel\parallel A^{-1}_{|E^u}\parallel\parallel L_E-L_F\parallel\\ &\leq\lambda^2 d_{E^u\oplus E^s}(E,F).
\end{align*}
\end{proof}


The following two lemmas show that the defined distance depends in a bi-Lipschitz way on the reference spaces $E^u$ and $E^s$.

\begin{lemma}\label{Continuity of the nice distance on distribution}
If $B$ is a linear map such that $\|A-B\|\leq \epsilon$, for some small $\epsilon>0$, then we have $d_{E^u\oplus E^s}(BE^u,E^u)\leq \epsilon.$
\end{lemma}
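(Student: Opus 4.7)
The plan is to reduce the statement to a direct computation by observing that $E^u$ is itself the graph of the zero linear map $E^u \to E^s$, so $L_{E^u} = 0$ and
$$d_{E^u\oplus E^s}(BE^u, E^u) = \|L_{BE^u}\|.$$
The task therefore becomes identifying $L_{BE^u}$ as an explicit operator from $E^u$ to $E^s$ and bounding its norm by $\|B-A\|$.

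To express $L_{BE^u}$, I would introduce the projections $\pi_u$ and $\pi_s$ onto $E^u$ and $E^s$ along the splitting $\mathbb{R}^n = E^u \oplus E^s$, taking the ambient inner product so that the splitting is orthogonal and the two projections have unit norm. Writing $B = A + (B - A)$ and using the $A$-invariance of $E^u$ (so that $\pi_s A_{|E^u} = 0$ while $A_{|E^u}$ is an automorphism of $E^u$ with $\|(A_{|E^u})^{-1}\| \leq \lambda$), a Neumann-series argument shows that for $\epsilon$ small the map
$$\pi_u B_{|E^u} = A_{|E^u} + \pi_u(B - A)_{|E^u}$$
is invertible with $\|(\pi_u B_{|E^u})^{-1}\| \leq (\lambda^{-1} - \epsilon)^{-1}$. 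This both guarantees $BE^u \in \xi_s$ and yields the explicit formula
$$L_{BE^u} = \pi_s B_{|E^u} \circ (\pi_u B_{|E^u})^{-1} = \pi_s (B - A)_{|E^u} \circ (\pi_u B_{|E^u})^{-1},$$
in which the crucial cancellation of the $A$-term comes from $\pi_s A_{|E^u} = 0$.

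From here the bound is immediate: $\|\pi_s(B - A)_{|E^u}\| \leq \|B - A\| \leq \epsilon$, hence $\|L_{BE^u}\| \leq \epsilon/(\lambda^{-1} - \epsilon)$, which is at most $\epsilon$ as soon as $\epsilon \leq \lambda^{-1} - 1$. The only real obstacle is book-keeping around the ``any operator norm'' clause in the definition of $d_{E^u \oplus E^s}$; my plan is to fix the norm coming from an inner product for which $E^u \perp E^s$, so that the projections are contractive and the chain of inequalities above produces the stated bound. For any other equivalent operator norm one gets the same estimate up to a multiplicative constant, which can in turn be absorbed into the smallness of $\epsilon$.
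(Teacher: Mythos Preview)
Your argument is correct. The direct computation of $L_{BE^u}=\pi_s(B-A)_{|E^u}\circ(\pi_uB_{|E^u})^{-1}$ via the projections, together with the Neumann-type lower bound $\|\pi_uB_{|E^u}x\|\geq(\lambda^{-1}-\epsilon)\|x\|$, cleanly yields $\|L_{BE^u}\|\leq\epsilon/(\lambda^{-1}-\epsilon)\leq\epsilon$ for $\epsilon\leq\lambda^{-1}-1$. Your remark on the choice of norm (working in an adapted inner product with $E^u\perp E^s$, and absorbing constants otherwise) is also fine.

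The paper proceeds differently: instead of computing $L_{BE^u}$ directly, it first establishes the conjugation identity $L_{BE^u}=A_{|E^s}\,L_{A^{-1}BE^u}\,A^{-1}_{|E^u}$ (which is exactly the mechanism behind the contraction Lemma~\ref{Contraction of the action of hyperbolic maps}), giving $\|L_{BE^u}\|\leq\lambda^2\|L_{A^{-1}BE^u}\|$, and then appeals to the fact that $A^{-1}B$ is a perturbation of the identity to control $\|L_{A^{-1}BE^u}\|$. Your route is more self-contained and produces an explicit constant without invoking the previous lemma; the paper's route recycles the contraction estimate and pushes the perturbation to the identity map, but leaves the final bound implicit. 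Both arguments ultimately exploit the same cancellation $\pi_sA_{|E^u}=0$.
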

\begin{proof}
Since we take a small $\epsilon,$ $BE^u$ is transverse to $E^s.$ Let us show that $L_{BE^u}=AL_{A^{-1}BE^u}A^{-1}.$ Take $x\in E^u$, then by definition of $L_{BE^u}$ we have $x+L_{BE^u}x\in BE^u,$ so $A^{-1}x+A^{-1}L_{BE^u}x\in A^{-1}BE^u.$ We have also $A^{-1}x+L_{A^{-1}BE^u}A^{-1}x\in A^{-1}BE^u,$ so we deduce that
$$A^{-1}L_{BE^u}=L_{A^{-1}BE^u}A^{-1}_{|E^u}.$$
We have also $L_{E^u}=0,$ hence  $$d_{E^u\oplus E^s}(BE^u,E^u)=\|A_{E^s}L_{A^{-1}BE^u}A^{-1}_{E^u}-0\|\leq\lambda^2\|L_{A^{-1}BE^u}\|,$$ and since $A^{-1}B$ is close to the identity we deduce the lemma.
\end{proof}

\begin{lemma}\label{Benoit approximation}
Let $\epsilon>0$ small, and take $E_0$ (resp $F_0$) a subspace of dim $q$ (resp $m-q$) such that $d(E_0,E^u)\leq\epsilon$ (resp $d(F_0,E^s)\leq \epsilon)$. Then, there is $\delta=\delta(\epsilon),$ such that $\delta(\epsilon)\xrightarrow[\epsilon\to 0]{}1$ and for all $E, E'$ transversal to $E^s$ and $F_0$ and close enough to $E^u$ we have
\begin{equation*}
    \frac{1}{\delta}  d_{E^u\oplus E^s}(E,E')\leq d_{E_0\oplus F_0}(E,E')\leq \delta  d_{E^u\oplus E^s}(E,E').
\end{equation*}
\end{lemma}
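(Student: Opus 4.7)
The plan is to write both distances as operator norms of ``slope'' maps and derive a perturbative comparison via an explicit change-of-basis computation involving the two projection maps. Let $P = P_{E^u \parallel E^s}$ denote the projection onto $E^u$ along $E^s$, and $Q = P_{E_0 \parallel F_0}$ the projection onto $E_0$ along $F_0$. The hypothesis $d(E_0,E^u), d(F_0,E^s) \leq \epsilon$ yields the operator norm estimate $\|Q - P\| = O(\epsilon)$. In particular $\|Q|_{E^s}\| = O(\epsilon)$, the restriction $(I-Q)|_{E^s}$ is within $O(\epsilon)$ of the identity, and $Q|_{E^u}: E^u \to E_0$ is an isomorphism $O(\epsilon)$-close to the canonical identification.

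For each $E$ transverse to $E^s$ and $F_0$, let $L_E: E^u \to E^s$ and $\tilde L_E: E_0 \to F_0$ be the linear maps whose graphs equal $E$, so that $d_{E^u \oplus E^s}(E,E') = \|L_E - L_{E'}\|$ and $d_{E_0 \oplus F_0}(E,E') = \|\tilde L_E - \tilde L_{E'}\|$. Writing a vector $v \in E$ both as $(I+L_E)x$ with $x = Pv \in E^u$ and as $y + \tilde L_E y$ with $y = Qv \in E_0$ gives $y = T_E x$ for the map $T_E := Q(I + L_E)|_{E^u}$, and hence the change-of-parametrization identity
\[
\tilde L_E \circ T_E = (I - Q)(I + L_E)|_{E^u}.
\]
For $E$ close to $E^u$, the norm $\|L_E\|$ is small, so $T_E$ is within $O(\epsilon)$ of $Q|_{E^u}$; consequently $T_E$ is invertible with $\|T_E^{-1}\| = 1 + O(\epsilon)$, and we obtain $\tilde L_E = (I-Q)(I+L_E) T_E^{-1}$.

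Forming the difference $\tilde L_E - \tilde L_{E'}$ and substituting $T_E^{-1} - T_{E'}^{-1} = -T_E^{-1} Q (L_E - L_{E'}) T_{E'}^{-1}$ decomposes it into a main term $(I-Q)(L_E - L_{E'}) T_{E'}^{-1}$ and a correction term containing the factor $Q(L_E - L_{E'})$. Since $L_E - L_{E'}$ takes values in $E^s$, the bound $\|Q|_{E^s}\| = O(\epsilon)$ makes the correction $O(\epsilon) \|L_E - L_{E'}\|$, while the main term has norm within factor $1 + O(\epsilon)$ of $\|L_E - L_{E'}\|$ thanks to $(I-Q)|_{E^s}$ being near-isometric. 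This gives the upper bound $\|\tilde L_E - \tilde L_{E'}\| \leq \delta(\epsilon) \|L_E - L_{E'}\|$ with $\delta(\epsilon) \to 1$ as $\epsilon \to 0$; the reverse inequality follows by symmetry, applying the same argument with the roles of $(E^u, E^s)$ and $(E_0, F_0)$ swapped. The main obstacle is purely bookkeeping, namely verifying that every perturbation factor consolidates into $1 + O(\epsilon)$, which ultimately rests on the elementary fact that $Q \to P$ in operator norm as $(E_0,F_0) \to (E^u,E^s)$.
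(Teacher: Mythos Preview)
Your proof is correct and takes a somewhat different route from the paper's. The paper argues in two stages: first it fixes $F_0=E^s$ and perturbs only the unstable reference to $E_0$, deriving the explicit relation $L_E^0 B_\epsilon = L_E - M_1$ via a change-of-basis map $B_\epsilon$ built from the slope maps $M_1,M_2$ of $E_0,F_0$; then it fixes $E_0=E^u$ and perturbs only the stable reference to $F_0$, obtaining $L_E^0 = B_\epsilon L_E M_\epsilon^{-1}$; finally it composes the two estimates. You instead treat both perturbations at once through the pair of oblique projections $P,Q$, deriving the single identity $\tilde L_E = (I-Q)(I+L_E)T_E^{-1}$ and isolating a main term plus an $O(\epsilon)$ correction in one pass, with the reverse inequality handled by symmetry. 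Your approach is more streamlined and makes the perturbative structure (everything resting on $\|Q-P\|=O(\epsilon)$) transparent; the paper's case-splitting is more hands-on and yields explicit formulas from which both inequalities can be read off directly without invoking symmetry. Both arrive at the same $\delta(\epsilon)\to 1$ conclusion.
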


\begin{proof} 
Let $M_1$ (resp $M_2$) the linear map from $E^u$ to $E^s$ (resp $E^s$ to $E^u$) whose graph is $E_0$ (resp $F_0$). Since $E_0$ (resp $F_0$) is close to $E^u$ (resp $E^s$) we deduce using the previous lemma that $\|M_i\|<\epsilon,$ for $i=1,2.$ Let $B_\epsilon$ the linear map from $\mathbb{R}^n$ to $\mathbb{R}^n$ defined on $E^u$ as $Id_{E^u}+M_1$ and on $E^s$ as $Id_{E^s}+M_2.$

Let $E$ and $E'$ be two subspaces transverse to $E^s$ and $F_0$, and let $L_E$ (resp $L_E^0$) the linear map from $E^u$ to $E^s$ (resp $E_0$ to $F_0$) whose graph is $E$. 

First, assume that $F_0=E^s$. Let $x\in E^u,$ then we have by definition of $L_E:$
$$B_\epsilon x +(L_Ex-M_1x)= x +M_1x +(L_Ex-M_1x)\in E,$$
and since $L_E^0(B_\epsilon x)$ is the unique vector in $F_0$ such that $B_\epsilon x+L_E^0(B_\epsilon x)\in E,$ we deduce that $$L_E^0B_\epsilon x=L_Ex-M_1x,$$ using the last equality we deduce that
\begin{align*}
    \|L_E^0-L_{E'}^0\|=\|(L_E^0-L_{E'}^0)B_\epsilon B_\epsilon^{-1}\| &\leq \|B_\epsilon^{-1}\|\|L_E^0B_\epsilon-L_{E'}B_\epsilon\|\\
    &\leq \|B_\epsilon^{-1}\|\|L_E-L_{E'}\|.
\end{align*}
Since $B_\epsilon$ is a perturbation of $Id,$ the norm $\|B_\epsilon^{-1}\|$ is close to 1, and goes to 1 when $\epsilon$ goes to 0, which proves the lemma in the special case where $F_0=E^s.$\\
Now assume that $E_0=E^u$, and define $M_\epsilon$ from $E^u$ to $E^u$ $$M_\epsilon x=x+L_Ex-B_{\epsilon}L_Ex=x-M_2L_Ex\in E^u,$$
notice that $M_\epsilon$ does not depend on $E.$
Since $E_0=E^u,$ we have $L_E^0M_\epsilon x=B_\epsilon L_Ex,$ for all $x\in E^u.$ The map $M_\epsilon$ is a small perturbation of $Id_{E^u},$ so it is invertible and we have
\begin{equation}
    L_E^0=B_\epsilon L_EM_\epsilon^{-1},
\end{equation}
we deduce that
\begin{equation}\label{Second step in Benoit approximation}
    \|L_E^0-L_{E'}^0\|=\|B_\epsilon(L_E-L_E^0)M_\epsilon^{-1}\|\leq (1+\epsilon)(1+\epsilon)\|L_E-L_{E'}\|,
\end{equation}
which proves the lemma in the special case where $E_0=E^u.$ Combining the two cases we get the proof in general.

\end{proof}


We will need a few lemmas to get the regularity of the unstable distribution when restricted to a small piece of stable manifold. The strategy is to use the dynamic of $f$ to give a upper bound for the modulus of continuity of $E^u$ when restricted to a local stable leaf.

\begin{lemma}\label{Summable modulus majoration}
Let $f:\Lambda\to\Lambda$ be a $C^{1,\omega}$ hyperbolic map and $\lambda$ the constant given by equation (1) with respect to an adapted norm. Define for $x_0\in\Lambda,$ $n\in\mathbb{N}$ and $c\in(0,\lambda),$ the map $\Omega_n^c:W_{\epsilon}^s(x_0)\times W_{\epsilon}^s(x_0)\to \mathbb{R}^+$
\begin{equation}
    \Omega_n^c(x,y)=\sum\limits_{k=0}^{n-1}c^{n-k}\omega\big(d(f^kx,f^ky)\big),
\end{equation}
then we have for all $n\in\mathbb{N}$:
\begin{equation}
    \Omega_n^c(x,y)\leq \frac{c}{\lambda-c}\cdot\omega\big(\lambda^nd(x,y)\big).
\end{equation}

\end{lemma}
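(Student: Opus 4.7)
The plan is to reduce the bound to three standard ingredients: stable contraction along the orbit, monotonicity of $\omega$, and the fact that concavity plus $\omega(0)=0$ makes $t\mapsto\omega(t)/t$ non-increasing. First I would use that since the norm is adapted and $x,y\in W_\epsilon^s(x_0)$, the local stable manifold theorem gives the clean contraction estimate $d(f^kx,f^ky)\le \lambda^k d(x,y)$ for every $k\ge 0$. Combined with monotonicity of $\omega$, this yields
\begin{equation*}
\omega\bigl(d(f^kx,f^ky)\bigr)\le \omega\bigl(\lambda^k d(x,y)\bigr).
\end{equation*}

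Next I would exploit concavity. Because $\omega$ is concave with $\omega(0)=0$, the map $t\mapsto \omega(t)/t$ is non-increasing on $(0,\infty)$ (this is precisely the fact already used in the proof of the equivalence for Dini summability earlier in the text). For $0\le k\le n-1$ one has $\lambda^k d(x,y)\ge \lambda^n d(x,y)$, hence
\begin{equation*}
\frac{\omega(\lambda^k d(x,y))}{\lambda^k d(x,y)}\le \frac{\omega(\lambda^n d(x,y))}{\lambda^n d(x,y)},
\end{equation*}
which rearranges to $\omega\bigl(\lambda^k d(x,y)\bigr)\le \lambda^{k-n}\omega\bigl(\lambda^n d(x,y)\bigr)$. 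This is the main pointwise inequality and is really the heart of the argument; everything else is mechanical.

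Finally, I would plug both estimates into the definition of $\Omega_n^c$ and collect the resulting geometric series. Setting $j=n-k$ and using $c<\lambda$,
\begin{equation*}
\Omega_n^c(x,y)\le \omega\bigl(\lambda^n d(x,y)\bigr)\sum_{k=0}^{n-1}c^{n-k}\lambda^{k-n}=\omega\bigl(\lambda^n d(x,y)\bigr)\sum_{j=1}^{n}\Bigl(\frac{c}{\lambda}\Bigr)^{j}\le \omega\bigl(\lambda^n d(x,y)\bigr)\cdot\frac{c/\lambda}{1-c/\lambda},
\end{equation*}
and the right-hand side equals $\frac{c}{\lambda-c}\,\omega\bigl(\lambda^n d(x,y)\bigr)$, which is exactly the claimed bound.

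The only step that is not entirely cosmetic is the use of concavity to compare $\omega(\lambda^k d)$ to $\omega(\lambda^n d)$; without it, one would only get $\omega(\lambda^k d)\le \omega(d)$, which is far too weak for the forward telescoping argument. The assumption $c<\lambda$ enters precisely to ensure convergence of the geometric series $\sum_j(c/\lambda)^j$, and the condition that we are on a stable leaf is what supplies the forward contraction needed to line up the modulus of continuity with $\lambda^n d(x,y)$ rather than with $d(x,y)$ itself.
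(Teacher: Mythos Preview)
Your proof is correct and follows essentially the same approach as the paper: both use the stable contraction $d(f^kx,f^ky)\le\lambda^k d(x,y)$, then the concavity of $\omega$ (via the fact that $t\mapsto\omega(t)/t$ is non-increasing) to obtain $\omega(\lambda^k t)\le\lambda^{k-n}\omega(\lambda^n t)$, and finally sum the geometric series $\sum_{j=1}^n(c/\lambda)^j$. Your write-up is in fact a bit more explicit about the role of each hypothesis than the paper's own proof.
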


\begin{proof}
Let $x,y\in W^s_{\epsilon}(x_0).$ By the concavity of $\omega$:

\begin{equation*}
    \frac{\omega\big(\lambda^nt\big)}{\lambda^nt}\geq\frac{\omega\big(\lambda^kt\big)}{\lambda^kt}, ~\forall n,k,t\in\mathbb{R}^+\text{ and } k\leq n.
\end{equation*}
Now using the fact that $d(f^kx,f^ky)\leq\lambda^kd(x,y)$ for all $k\in\mathbb{N},$ we get:
\begin{align*}
    \Omega_n^c(x,y)&\leq\sum\limits_{k=0}^{n-1}c^{n-k}\omega\big(\lambda^kd(x,y)\big)\\
    &\leq\sum\limits_{k=0}^{n-1}\big(\frac{c}{\lambda}\big)^{n-k}\omega\big(\lambda^nt\big)\leq \frac{c}{\lambda-c}\omega\big(\lambda^nt\big).
\end{align*}

\end{proof}

\begin{lemma}\label{Regularity of unstable manifolds}
Let $f:\Lambda\to\Lambda$ be a $C^{1,\omega}$ hyperbolic map, then local unstable manifolds are $C^{1,\omega}.$
\end{lemma}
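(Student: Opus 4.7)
The plan is to show that $W^u_\epsilon(x)$, which is a $C^1$ submanifold by the classical Hadamard--Perron theorem, has its tangent distribution varying with modulus of continuity $\omega$ along it. I would first fix an exponential chart at $x$ realizing the splitting $T_xM=E^u_x\oplus E^s_x$ as coordinate subspaces, so that $W^u_\epsilon(x)$ is the graph of a $C^1$ map $\phi\colon B_\epsilon(E^u_x)\to E^s_x$ with $\phi(0)=0$ and $d\phi_0=0$. By Lemma \ref{Benoit approximation} it suffices to bound
\begin{equation*}
d_{E^u_x\oplus E^s_x}\bigl(T_yW^u,\,T_zW^u\bigr)\leq C\,\omega\bigl(d(y,z)\bigr)
\end{equation*}
for $y,z\in W^u_\epsilon(x)$ sufficiently close.

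For the main estimate, set $E_k:=T_{f^{-k}y}W^u$ and $F_k:=T_{f^{-k}z}W^u$. The $df$-invariance of the tangent distribution gives $E_{k-1}=df_{f^{-k}y}(E_k)$ and $F_{k-1}=df_{f^{-k}z}(F_k)$, so a triangle inequality splits
\begin{equation*}
d(E_{k-1},F_{k-1})\leq d\bigl(df_{f^{-k}y}E_k,\,df_{f^{-k}y}F_k\bigr)+d\bigl(df_{f^{-k}y}F_k,\,df_{f^{-k}z}F_k\bigr).
\end{equation*}
The first term contracts by $\lambda^2$ via Lemma \ref{Contraction of the action of hyperbolic maps} (applied to $df_{f^{-k}y}$ viewed as a hyperbolic linear map close to the linearization $df_{f^{-k}x}$), while the second is at most $C\,\omega\bigl(d(f^{-k}y,f^{-k}z)\bigr)\leq C\,\omega\bigl(\lambda^kd(y,z)\bigr)$ by the $\omega$-continuity of $df$ and the contraction of $f^{-1}$ on $W^u$. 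Iterating $n$ times gives
\begin{equation*}
d(E_0,F_0)\leq \lambda^{2n}\,d(E_n,F_n)+C\sum_{k=1}^n\lambda^{2(k-1)}\,\omega\bigl(\lambda^kd(y,z)\bigr).
\end{equation*}
Since $E_n,F_n$ both lie in a fixed unstable cone around $E^u_{f^{-n}x}$ (read in a chart at $f^{-n}x$), the first term vanishes as $n\to\infty$; the geometric series is bounded by $\omega(d(y,z))/(1-\lambda^2)$ using only monotonicity of $\omega$. Notably, no Dini summability is used in this lemma; it will become essential only when, later, the regularity of the full distribution $E^u$ is split across stable leaves.

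The hard part is making the $\lambda^2$-contraction rigorous. Lemma \ref{Contraction of the action of hyperbolic maps} is stated for a fixed linear hyperbolic map acting on a fixed Grassmannian, whereas here $df_{f^{-k}y}$ is a map between distinct tangent spaces that varies along the backward orbit and differs from the linearization $df_{f^{-k}x}$. I would trivialize the Grassmann bundle along arcs of the backward orbit through exponential charts at the points $f^{-k}x$, check that the resulting transition maps are uniformly close to the identity as $\epsilon\to 0$, and invoke Lemma \ref{Benoit approximation} to absorb the distortion into a factor $\delta(\epsilon)\to 1$. The small deviation of $df_{f^{-k}y}$ from a linear hyperbolic map is then a perturbation controlled by the $\omega$-continuity of $df$, at the price of slightly worsening $\lambda^2$ to some $\lambda'^2<1$. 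With this bookkeeping carried out, the induction above closes and yields $\phi\in C^{1,\omega}$, i.e.\ $W^u_\epsilon(x)$ is $C^{1,\omega}$.
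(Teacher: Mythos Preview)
Your argument is correct and matches the paper's: both split via the triangle inequality into a term contracted by Lemma~\ref{Contraction of the action of hyperbolic maps} (with Lemma~\ref{Benoit approximation} absorbing the chart/basepoint changes into a factor $\delta$ close to $1$) and a term bounded by $\omega\bigl(d(f^{-k}y,f^{-k}z)\bigr)$, then iterate and sum the resulting geometric series. The only cosmetic difference is that the paper pushes forward an auxiliary smooth distribution $E^n=f^n_*E$ and proves a uniform $C\omega$-bound for each $E^n$ (using smoothness of $E$ to control the initial term), while you iterate directly on $E^u=TW^u$ and let the initial term vanish as $n\to\infty$; your remark that Dini summability plays no role in this lemma is also correct.
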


\begin{proof}
Consider $E$ (resp $F$) a smooth distribution close to $E^u$ (resp $E^s$) over $\Lambda,$ then consider the distribution $E^n:=f^n_*E.$ By definition $E^n$ converges exponentially to $E^u$ with respect to the distance $d=d_{E\oplus F}.$ So in order to prove the lemma, it is sufficient to prove that there is $C>0$ such that for all $n\geq 0$ the distribution $E^n$ has $C\omega$ as a modulus of continuity when restricted to an unstable leaf.

Fix $x_0\in \Lambda,$ and take $x,y\in W^u_\epsilon(x_0).$ Let $d_k=d_{E^u_{f^{-k}x_0}\oplus E^s_{f^{-k}x_0}}$ defined over distribution of dimension $q$ in a neighborhood of $f^{-k}x_0,$  we have:
\begin{align*}
d\big(E^{n+1}(x),E^{n+1}(y)\big)&=d\Big(df_{f^{-1}x}\big(E^n(f^{-1}x)\big),df_{f^{-1}y}\big(E^n(f^{-1}y)\big) \Big)\\
&\leq d\Big(df_{f^{-1}x}\big(E^n(f^{-1}x)\big),df_{f^{-1}x}\big(E^n(f^{-1}y)\big) \Big)\\&+d\Big(df_{f^{-1}x}\big(E^n(f^{-1}y)\big),df_{f^{-1}y}\big(E^n(f^{-1}y)\big) \Big)
\end{align*}

Upper bound for $d\Big(df_{f^{-1}x}\big(E^n(f^{-1}y)\big),df_{f^{-1}y}\big(E^n(f^{-1}y)\big) \Big):$
Since $f$ is $C^{1,\omega},$ we have $d\Big(df_{f^{-1}x}\big(E^n(f^{-1}y)\big),df_{f^{-1}y}\big(E^n(f^{-1}y)\big) \Big)\leq \omega\big(d(f^{-1}x,f^{-1}y)\big).$

Upper bound for $d\Big(df_{f^{-1}x}\big(E^n(f^{-1}x)\big),df_{f^{-1}x}\big(E^n(f^{-1}y)\big) \Big):$ using lemma \ref{Benoit approximation} and lemma \ref{Contraction of the action of hyperbolic maps} we deduce that
\begin{equation}
    (\delta\lambda)^2 d\big(E^n(f^{-1}x),E^n(f^{-1}y)\big)
\end{equation}
is the desired upper bound.

By induction, we deduce that for all $n\geq 0$ and $x,y\in W^u_\epsilon(x_0)$ we have:
\begin{align*}
d\big(E^n(x),E^n(y)\big)&\leq (\delta\lambda)^{2n}d\big(E(f^{-n}x),E(f^{-n}y)\big)+\sum\limits_{k=1}^n(\delta\lambda)^k\omega\big(\lambda^kd(x,y)\big)\\
&\leq d(x,y)+\big(\sum\limits_{k=1}^{+\infty}(\delta\lambda)^k\big)\omega\big(d(x,y)\big)\\
&\leq C\omega\big(d(x,y)\big),
\end{align*}
where the constant $C=C(\epsilon,\delta)$ does not depend on $x,y$ and $n.$ Finally we deduce that for all $x,y\in W^u_\epsilon(x_0)$ we have:
\begin{equation*}
    d(E^u_x,E^u_y)\leq C\omega\big(d(x,y)\big).
\end{equation*}

\end{proof}

\begin{lemma}\label{split regularity}
Let $X$ be a metric space, and $g:M\to X$ a continuous map. Assume that there is an $\epsilon,$ such that for all $x_0\in M$
\begin{align*}
    &d(gx,gy)\leq\omega\big(d(x,y)\big),\quad \forall x,y\in W_{\epsilon}^u(x_0),\\
   & d(gx,gy) \leq \omega\big(d(x,y)\big),\quad \forall x,y\in W_\epsilon^s(x_0).
\end{align*}
Then there is $K=K(\epsilon)>0$ such that for all $x,y \in M$ and $d(x,y)<\epsilon$ 
\begin{equation*}
    d(gx,gy)\leq 2\omega\big(Kd(x,y)\big).
\end{equation*}
\end{lemma}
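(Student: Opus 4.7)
My plan is to exploit the local product structure of the hyperbolic set $\Lambda$ (which I interpret as playing the role of $M$ here, given that the hypotheses involve local stable and unstable manifolds). For any two points $x,y \in \Lambda$ with $d(x,y) < \epsilon$ (shrinking $\epsilon$ if necessary), the bracket
\[
[x,y] := W^s_\epsilon(x) \cap W^u_\epsilon(y)
\]
is well-defined and unique, and lies simultaneously on the local stable manifold of $x$ and the local unstable manifold of $y$.

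The key quantitative input I need is the standard fact that canonical coordinates are bi-Lipschitz: by the uniform transversality of $E^u$ and $E^s$ on $\Lambda$ (their angle is bounded below), there exist constants $K_1, K_2 > 0$ depending only on $\epsilon$ such that
\[
d_{W^s_\epsilon(x)}(x,[x,y]) \le K_1\, d(x,y), \qquad d_{W^u_\epsilon(y)}([x,y],y) \le K_2\, d(x,y).
\]
This is essentially the content of Proposition 4.3 / Section 7.1 of Bowen's monograph and follows directly from applying the inverse function theorem in a chart where $E^u$ and $E^s$ become coordinate subspaces of $T_{x_0}M$.

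Given this, the proof becomes a one-line triangle inequality. Writing $z = [x,y]$, I apply the two hypotheses in turn:
\[
d(gx, gy) \le d(gx, gz) + d(gz, gy) \le \omega\bigl(d(x,z)\bigr) + \omega\bigl(d(z,y)\bigr),
\]
the first estimate being valid because $z \in W^s_\epsilon(x)$ and the second because $z \in W^u_\epsilon(y)$. Using monotonicity of $\omega$ together with the bracket bounds, and setting $K = \max(K_1, K_2)$, I obtain
\[
d(gx,gy) \le \omega(K_1 d(x,y)) + \omega(K_2 d(x,y)) \le 2\,\omega\bigl(K d(x,y)\bigr),
\]
which is the claim.

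The only real subtlety is the bracket estimate, and even this is routine: the main point to verify carefully is that the constants $K_i$ depend only on $\epsilon$ and on the uniform angle between the stable and unstable distributions, not on the individual points $x, y$. Because of the compactness of $\Lambda$ and the continuity of $E^u$ and $E^s$ (which is part of hyperbolicity), this uniformity is automatic, so no additional difficulty arises.
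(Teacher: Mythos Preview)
Your proof is correct and follows the same bracket decomposition as the paper: pass through $z=[x,y]=W^s_\epsilon(x)\cap W^u_\epsilon(y)$ and apply the two leafwise hypotheses via the triangle inequality. The only difference is in how the two legs are controlled. The paper uses a Pythagorean-type bound
\[
d(x,[x,y])^2+d([x,y],y)^2\le K_1\,d(x,y)^2
\]
and then invokes the concavity of $\omega$ in the form $\omega(s)+\omega(t)\le 2\,\omega\bigl(\tfrac{1}{\sqrt{2}}\sqrt{s^2+t^2}\bigr)$ to combine the two terms. You instead use the cruder linear bounds $d(x,[x,y]),\,d([x,y],y)\le K\,d(x,y)$ from the bi-Lipschitz property of canonical coordinates, together with only the monotonicity of $\omega$. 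Your route is marginally more elementary (it does not use concavity at all) and already yields the stated conclusion; the paper's $\ell^2$ estimate could in principle produce a smaller constant $K$, but that is irrelevant here.
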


\begin{proof}
For $\epsilon>0$ small, there is $K_1>0$ such that for all $x,y\in M$ with $d(x,y)<\epsilon$, we have the following inequality:
\begin{equation}\label{Pythagore}
    d(x,[x,y])^2+d([x,y],y)^2\leq K_1d(x,y)^2,
\end{equation}
where $[x,y]=W_\epsilon^s(x)\cap W_\epsilon^u(y)$ is the Bowen bracket of $x$ and $y$. Since $\omega$ is a modulus of continuity,
\begin{equation}
    \omega(s)+\omega(t)\leq 2\omega\Big(\frac{1}{\sqrt{2}}\sqrt{s^2+t^2}\Big), \forall s,t\geq 0,
\end{equation}
the last two inequalities implies that for all $x,y\in M$ satisfying $d(x,y)<\epsilon,$ 
\begin{align*}
    d(gx,gy)&\leq d(gx,g[x,y])+d(g[x,y],gy)\\
    &\leq \omega\big(d(x,[x,y])\big)+\omega\big(d([x,y],y)\big)\\
    &\leq 2\omega\big(\frac{1}{\sqrt{2}}\sqrt{d(x,[x,y])^2+d([x,y],y)^2}\big)\\
    &\leq 2\omega\big(\frac{1}{\sqrt{2}}\sqrt{K_1}d(x,y)\big).
\end{align*}
\end{proof}

\begin{proof}[Proof of theorem \ref{main theorem}]

Let $x_0\in\Lambda\subset M,$ and consider the positive orbit of $x_0.$ Take a neighborhood of $f^k(x_0)$ and $\psi_k:U_k\to\mathbb{R}^n$ a smooth chart. We can assume that for all $k\in\mathbb{N},$ $U_k\supset B(f^kx_0,\epsilon_0)$ for some $\epsilon_0>0,$ by taking $\epsilon_0$ less than the Lebesgue number. We can assume further that $d_{f^kx_0}\psi_k(E^u_{f^kx_0})=\mathbb{R}^q\times\lbrace0\rbrace$ and  $d_{f^kx_0}\psi_k(E^s_{f^kx_0})=\lbrace0\rbrace\times\mathbb{R}^{n-q}$. Let $g_k=\psi_{k+1}\circ f\circ\psi^{-1}_k$. We can choose $\epsilon>0$ small such that for all $k\in\mathbb{N},$ the map 
$$ g_k\circ g_{k-1}\circ\cdots\circ g_0:\psi_0(W^s_{\epsilon}(x_0))\to\mathbb{R}^n,$$ is well defined.

For $x\in W_g^s(x_0):=\psi_0\big(W^s_\epsilon(x_0)\big),$ let $E^u_x:=d\psi_0\big(\psi_0^{-1}(x)\big)\cdot E^u_{\psi_0^{-1}(x)}.$ To prove the theorem it will be enough to prove that:
\begin{align*}
W_g^s(x_0)&\to G^q(\mathbb{R}^n)\\
x&\mapsto E^u_x,    
\end{align*}
has a summable modulus of continuity.
By choosing $\epsilon >0$ smaller, we may assume that $E^u_x\oplus\mathbb{R}^{n-q}=\mathbb{R}^n, \forall x\in W_g^s(x_0).$

Let $x,y\in W^s_g(x_0), A_k=dg_k({g_{k-1}x})$ and $B_k=dg_k({g_{k-1}y})$ $\forall k\geq 0,$ (where $g_{-1}:=Id).$
Consider $d=d_{\mathbb{R}^q\oplus\mathbb{R}^{n-q}}$ then we have:
\begin{equation*}
    d(A_0E^u_x,B_0E^u_y)\leq d(A_0E^u_x,A_0E^u_y)+d(A_0E^u_y,B_0E^u_y).
\end{equation*}

Using lemma \ref{Benoit approximation}, we can find $\delta=\delta\big(W^s_{\epsilon}(x_0)\big)$ close to 1, such that 
\begin{equation*}
d(A_0E^u_x,B_0E^u_y) \leq\delta\cdot d_{E^u_x\oplus E^s_x}(A_0E^u_x,B_0E^u_y),     
\end{equation*} 

And using lemma $\ref{Contraction of the action of hyperbolic maps}$, we can find $\lambda=\lambda(W^s_{\epsilon}(x_0))\in (0,1)$ such that
\begin{equation*}
    d_{E^u_x\oplus E^s_x}(A_0E^u_x,A_0E^u_y)\leq \lambda^2d_{E^u_x\oplus E^s_x}(E^u_x,E^u_y),
\end{equation*}
so we deduce that 
\begin{equation}\label{First inequality}
d(A_0E^u_x,A_0E^u_y)\leq(\delta\lambda)^2\cdot d(E^u_x,E^u_y).    
\end{equation}

Since $f\in C^{1,\omega}$ we have $\|A_0-B_0\|\leq \omega(d(x,y)),$ then we apply lemma \ref{Continuity of the nice distance on distribution} and get 
\begin{equation*}
    d(A_0E^u_y,B_0E^u_y)\leq\omega(d(x,y)).
\end{equation*}
We deduce that for all $n\in\mathbb{N}$
\begin{align*}
    d(A_n&\cdots A_0E^u_x,B_n\cdots B_0E^u_y)\\&\leq (\delta\lambda)^2d(A_{n-1}\cdots A_0E^u_x,B_{n-1}\cdots B_0E^u_y)+\omega\big( d(g_{n-1}\cdots g_0x,g_{n-1}\cdots g_0y)\big)    \end{align*}

Using induction
\begin{align*}
d(A_n&\cdots A_0E^u_x,B_n\cdots B_0E^u_y) \\&\leq (\delta\lambda)^{2n}\cdot d(E_x^u,E^u_y)+\sum\limits_{k=0}^{n-1}(\delta^2\lambda^2)^{n-k}\omega\big(d(g_k\cdots g_0x,g_k\cdots g_0y)\big). \end{align*}
Since $\delta$ is arbitrarily close to $1$, we can assume that $(\delta\lambda)\in(0,1)$. This implies using lemma \ref{Summable modulus majoration} that
\begin{equation}
d(A_n\cdots A_0E^u_x,B_n\cdots B_0E^u_y) \leq (\delta\lambda)^{2n}\cdot d(E_x^u,E^u_y)+\frac{\delta^2\lambda}{1-\delta^2\lambda}\omega\big(\lambda^nd(x,y)\big). 
\end{equation}
Since these inequalities do not depend on charts up to multiplication by constant, we deduce that there is $M_1,M_2,M_3>0$ such that for all $ x,y\in W^s_{\epsilon}(x_0)$ and $n\in\mathbb{N}$
\begin{equation}\label{main inequality}
    d(E^u_{f^nx},E^u_{f^ny})\leq M_1(\delta\lambda)^{2n}d(E^u_x,E^u_y)+M_2\omega\big(M_3\lambda^nd(x,y)\big).  
\end{equation}

Let $\omega^u$ be the modulus of continuity of $E^u$ restricted to $W^s_\epsilon(x_0).$ Using (\ref{main inequality}) we deduce that
\begin{equation}
    \omega^u(\lambda_0^nt)\leq M_1(\delta\lambda)^{2n}\omega^u(t)+M_2M_3\omega\big(\lambda^nt\big), \forall n\in\mathbb{N}, t>0,
\end{equation}
where $\lambda_0\in(0,\lambda)$ only depends on the hyperbolic map $f.$ The latter inequality implies that $\omega^u$ is Dini summable. Using lemmas \ref{Regularity of unstable manifolds} and \ref{split regularity} we deduce that $E^u$ has a summable modulus of continuity, hence $\psi^{(u)}$ has a summable modulus of continuity.
\end{proof}

The next lemma is proven in Chapter 4, Lemma 4.3 of \cite{bowen1975equilibrium}. Together with the regularity of the geometric potential, it implies the existence and uniqueness of an equilibrium measure.

\begin{lemma}\label{unicity of equilibrium measure}
Let $\sigma:\Sigma_A\to\Sigma_A$ a subshift of finite type, and $f:\Lambda\to\Lambda$ semiconjugated to $\sigma$ via $\pi,$ then for any $\mu\in\mathcal{M}_f(\Lambda)$ there is a $\nu\in\mathcal{M}_{\sigma}(\Sigma_A)$ with $\pi_*\nu=\mu.$
\end{lemma}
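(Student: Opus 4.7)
The plan is a Krylov-Bogolyubov-style averaging argument carried out inside the fiber $\pi_*^{-1}(\mu)$. The key observation is that the semiconjugacy $\pi\circ\sigma=f\circ\pi$ gives $\pi_*\circ\sigma_*=f_*\circ\pi_*$, so if $\pi_*\nu_0=\mu$ then, by $f$-invariance of $\mu$, $\pi_*(\sigma_*^k\nu_0)=f_*^k\mu=\mu$ for every $k\geq 0$. Hence the whole $\sigma_*$-orbit of any lift of $\mu$ stays in $\pi_*^{-1}(\mu)$, and averaging will produce an invariant lift.

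First I would construct a (not necessarily invariant) initial lift $\nu_0$ of $\mu$. Since $\pi\colon\Sigma_A\to\Lambda$ is continuous and surjective between compact metric spaces, the pull-back $\pi^{\ast}\colon C(\Lambda)\to C(\Sigma_A)$, $g\mapsto g\circ\pi$, is an isometric embedding whose image contains the constants. The positive, unital linear functional $g\mapsto\int g\,d\mu$ on $C(\Lambda)$ transports to a positive, unital linear functional on the closed subspace $\pi^{\ast} C(\Lambda)\subset C(\Sigma_A)$, and the positive extension form of the Hahn-Banach theorem yields a positive, unital linear functional on all of $C(\Sigma_A)$. By Riesz representation this functional is given by a Borel probability measure $\nu_0$ satisfying $\int g\circ\pi\,d\nu_0=\int g\,d\mu$ for all $g\in C(\Lambda)$, i.e.\ $\pi_*\nu_0=\mu$.

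Now form the Cesaro averages
\[
\nu_n=\frac{1}{n}\sum_{k=0}^{n-1}\sigma_*^k\nu_0\in\pi_*^{-1}(\mu).
\]
By weak-$*$ compactness of $\mathcal{M}(\Sigma_A)$, some subsequence $(\nu_{n_j})$ converges weak-$*$ to a probability measure $\nu$. The telescoping identity $\sigma_*\nu_n-\nu_n=\tfrac{1}{n}(\sigma_*^n\nu_0-\nu_0)\to 0$ forces $\sigma_*\nu=\nu$, so $\nu\in\mathcal{M}_\sigma(\Sigma_A)$; weak-$*$ continuity of $\pi_*$ together with $\pi_*\nu_n=\mu$ then gives $\pi_*\nu=\mu$. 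The only input that is not entirely formal is the existence of the initial lift $\nu_0$; the remainder is the standard Krylov-Bogolyubov construction restricted to the convex, weak-$*$ compact, $\sigma_*$-invariant set $\pi_*^{-1}(\mu)$. One could alternatively apply the Markov-Kakutani fixed-point theorem directly to this set and obtain a $\sigma_*$-fixed point in one step.
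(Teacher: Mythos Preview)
Your argument is correct and is precisely the standard proof: lift $\mu$ arbitrarily (via Hahn--Banach/Riesz or, equivalently, a Borel section of the surjection $\pi$), observe that the fiber $\pi_*^{-1}(\mu)$ is nonempty, convex, weak-$*$ compact and $\sigma_*$-invariant, then either average or apply Markov--Kakutani. The paper does not give its own proof but merely cites Bowen (Chapter~4, Lemma~4.3 of \cite{bowen1975equilibrium}), whose argument is exactly this one, so your proposal agrees with the intended reference.
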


\begin{proof}[Proof of corollary 1]

Using Markov partition, the map $f$ is semiconjugated to a subshift of finite type \cite{bowen1975equilibrium}. Let us consider the following commutative diagram: 
\begin{center}
\begin{tikzpicture}
  \matrix(m)[matrix of math nodes,row sep=2em,column sep=4em,minimum width=2em]
  { \Sigma_A & \Sigma_A &\\
    \Lambda & \Lambda & \mathbb{R} \\};
  \path[-stealth]
    (m-1-1) edge node [above] {$\sigma$} 
    (m-1-2) edge node [left] {$\pi$} (m-2-1)
    (m-1-2) edge node [right] {$\pi$} (m-2-2)
    (m-2-1) edge node [above] {$f$} (m-2-2)
    (m-1-2) edge node [above] {$\null\kern4ex\phi^u\circ\pi$} (m-2-3)
    (m-2-2) edge node [above] {$\phi^u$} (m-2-3);
\end{tikzpicture}
\end{center}
Since $\phi^u$ is $C^{0,\omega_0}$, where $\omega_0$ is a modulus that satisfies Dini condition (theorem \ref{main theorem}) and $\pi$ is Hölder continuous, then $\phi^u\circ\pi$ is $C^{0,\omega_0}$, so Theorem 1 of \cite{FanJiang} give us a unique equilibrium state $\mu_{\phi^u\circ\pi}$ for $(\sigma,\phi^u\circ\pi).$ We push this measure by $\pi$ to get a measure $\mu_{\phi^u},$ and using the fact that $\pi$ is a bijection when restricted to a set of full $\nu$ measure, we deduce that $h_{\mu_{\phi^u}}(f)=h_{\phi^u\circ\pi}(\sigma),$ which implies that $\mu_{\phi^u},$ is an equilibrium measure. If $\mu_2$ is another equilibrium measure, we lift it using the previous lemma to an equilibrium measure of $(\sigma,\phi^u\circ\pi),$ and by uniqueness of the equilibrium measure for the shift, we get $\mu_2=\mu_{\phi^u}.$

Since $\mu_{\phi^u}$ is the unique equilibrium state, it is ergodic. Indeed if
$$\mu_{\phi}=t\mu_1+(1-t)\mu_2,$$
then we have:

\begin{align*}
P(\phi^{(u)}) &= h_{\mu_{\phi^{(u)}}}(f)+\int \phi^{(u)}~d\mu    \\
&= th_{\mu_1}(f)+(1-t)h_{\mu_2}(f)+t\int \phi^u~d\mu_1+(1-t)\int \phi^u~d\mu_2\\ 
&\leq P(\phi^{(u)}),
\end{align*}
which implies that $P_{\mu_1}(\phi^u)=P_{\mu_2}(\phi^u)=P(\phi^u),$ so $\mu_1$ and $\mu_2$ are equilibrium states, this implies that $\mu=\mu_1=\mu_2.$

\end{proof}

\section{Proof of theorem \ref{Physic}}

Given Theorem \ref{main theorem} the proof of Theorem \ref{Physic} is in the same way as in \cite{bowen1975equilibrium}, but we need to adapt a few lemmas in low regularity.
The following three lemmas are crucial to prove that the equilibrium measure is a physical measure.

\begin{lemma}[Distortion lemma]\label{Distortion lemma}
Let $f:\Lambda\to\Lambda$ a $C^{1+\omega}$ hyperbolic map, where $\omega$ is a Dini summable modulus. Fix $\epsilon>0$ and an unstable invariant cone family $(C^u_x)_{x\in U}$ of sufficiently small angle,  and  $\mathcal{F}_{x,n}$ a foliation of $B_n(x,\epsilon)$ tangent to $C^u,$ whose leafs are $C^{1+\omega}$ and $E=T\mathcal{F}_{x,n}$ is $C^{0,\omega}$ distribution. Then there is $C=C(\epsilon)$ such that for all $n\in\mathbb{N}, x\in\Lambda$ and $y\in B_n(x,\epsilon)$
\begin{equation}\label{Distortion}
 \frac{1}{C}\leq \frac{\det df^n(x)_{|E(x)}}{\det df^n(y)_{|E(y)}}\leq C,
\end{equation}
We have also for all $n\in \mathbb{N}:$
\begin{equation}\label{Typical expansion for subspaces!}
\frac{1}{C}\leq \frac{\det df^n(x)_{|E(x)}}{J^u f^n(x)}\leq C.    
\end{equation}
\end{lemma}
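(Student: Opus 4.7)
The strategy is the classical telescoping-Birkhoff-sum distortion argument, with the Hölder hypothesis replaced by Dini summability of $\omega$. The key preliminary observation is that the function $\Phi(z,F) := \log\det df(z)|_F$ on $\Lambda \times G^q(M)$ is $C^{0,\omega}$ in $z$ (because $df$ is) and Lipschitz in $F$ with respect to the Grassmann metric (since $F \mapsto \det df(z)|_F$ is smooth on the set of $F$'s on which $df(z)$ is injective, and $\Lambda$ is compact). Combined with the hypothesis $E \in C^{0,\omega}$, this yields, for nearby $z,w$,
\[
\bigl|\,\Phi(z,E(z)) - \Phi(w,E(w))\,\bigr| \leq C\,\omega\bigl(d(z,w)\bigr).
\]

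For inequality (\ref{Distortion}), I would telescope:
\[
\log\frac{\det df^n(x)|_{E(x)}}{\det df^n(y)|_{E(y)}} = \sum_{k=0}^{n-1}\bigl[\Phi(f^k x,E(f^k x))-\Phi(f^k y,E(f^k y))\bigr],
\]
so the left side is bounded by $C\sum_{k=0}^{n-1}\omega(d(f^k x,f^k y))$. The point $y$ lies on the leaf of $\mathcal{F}_{x,n}$ through $x$ (the general case in $B_n(x,\epsilon)$ reduces to this via leaf holonomy along the stably-contracting transverse direction); tangency of the leaves to the unstable cone then forces backward contraction by a factor $\lambda$ per step, so $d(f^{n-1}x,f^{n-1}y)\leq\epsilon$ gives $d(f^k x,f^k y)\leq \lambda^{n-1-k}\epsilon$. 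The sum is dominated by $\sum_{j=0}^{+\infty}\omega(\lambda^j\epsilon)$, which is finite by the Dini-summability proposition of Section 2.6, producing an $n$-independent bound.

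For inequality (\ref{Typical expansion for subspaces!}), I would instead telescope the ratio of the $E$-Jacobian to the $E^u$-Jacobian along a single orbit:
\[
\log\frac{\det df^n(x)|_{E(x)}}{J^u f^n(x)} = \sum_{k=0}^{n-1}\bigl[\Phi(f^k x,(df^k)_x E(x))-\Phi(f^k x, E^u(f^k x))\bigr].
\]
The Lipschitz dependence of $\Phi(z,\cdot)$ on the Grassmannian bounds each term by $C\,d_{grass}\bigl((df^k)_x E(x),\, E^u(f^k x)\bigr)$, and by Lemma \ref{Contraction of the action of hyperbolic maps} combined with Lemma \ref{Benoit approximation}, this Grassmann distance is contracted by a factor $\lambda^2$ per step, yielding a convergent geometric series.

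The main obstacle is the first sum in the proof of (\ref{Distortion}): without Dini summability of $\omega$, $\sum_j\omega(\lambda^j\epsilon)$ diverges and the distortion becomes unbounded in $n$. This is exactly where the regularity hypothesis of Theorem \ref{main theorem} enters; the handling of $y$'s not on the leaf through $x$ via transverse (stable) holonomy is a second technical point, though it is routine given the local product structure of the Bowen ball. Part (\ref{Typical expansion for subspaces!}) is substantially easier since the exponential contraction on the Grassmannian produces a plain geometric series insensitive to the regularity class of $\omega$.
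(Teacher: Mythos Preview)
Your treatment of \eqref{Typical expansion for subspaces!} is correct and is exactly the paper's argument. The gap is in your proof of \eqref{Distortion}.

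The telescoping identity you wrote is wrong. The chain rule gives
\[
\log\det df^n(x)_{|E(x)}=\sum_{k=0}^{n-1}\Phi\bigl(f^kx,\,(df^k)_xE(x)\bigr),
\]
with the \emph{pushed-forward} subspace $(df^k)_xE(x)$ in the second slot, not $E(f^kx)$. In fact $E(f^kx)$ is not even defined: $E=T\mathcal{F}_{x,n}$ lives only on $B_n(x,\epsilon)$, and $f^kx$ generally does not lie in that ball. Consequently the estimate $|\Phi(z,E(z))-\Phi(w,E(w))|\le C\,\omega(d(z,w))$, which you correctly derive from the hypothesis $E\in C^{0,\omega}$ on $B_n(x,\epsilon)$, simply does not apply to the summands that actually appear.

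With the correct telescoping, each summand splits into a base-point contribution controlled by $\omega\bigl(d(f^kx,f^ky)\bigr)$ \emph{plus} a Grassmann contribution $d_{grass}\bigl((df^k)_xE(x),(df^k)_yE(y)\bigr)$, and it is the second piece that must be bounded uniformly in $k$. The paper handles it by routing both pushed-forward subspaces through $E^u$: since $E$ lies in the unstable cone, Lemma~\ref{Contraction of the action of hyperbolic maps} gives $d_{grass}\bigl((df^k)_xE(x),E^u_{f^kx}\bigr)\le\lambda^{2k}\alpha\bigl(E(x)\bigr)$ and similarly at $y$, so the Grassmann discrepancy contributes a geometric series---precisely the mechanism you already used for \eqref{Typical expansion for subspaces!}, but which your Part~1 omits. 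The paper then makes your ``leaf holonomy'' reduction explicit by writing $x\to z\to y$ with $z\in W^s_\epsilon(x)$ and $y$ on the leaf $P(z,\epsilon)$, so that $d(f^kx,f^kz)\le\lambda^k\epsilon$ (stable contraction) and $d(f^kz,f^ky)\lesssim\lambda^{n-k}\epsilon$ (backward contraction along the cone-tangent leaf); Dini summability of $\omega$ controls both resulting sums $\sum_k\omega(\lambda^k\epsilon)$.
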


\begin{remark}
If $f$ is an Anosov diffeomorphism the foliation $\big((W^u_\epsilon(y)\big)_{y\in W^s_\epsilon(x)}$ satisfies the previous lemma.
\end{remark}

\begin{proof}
Let $x\in \Lambda, z\in W^s_\epsilon(x)$ and $y\in P(z,\epsilon),$ where $P(z,\epsilon)$ is a leaf of $\mathcal{F}_{x,n}$ passing through $z.$
By the regularity of $E$ and of $f,$ there is $C_0=C_0(\epsilon)$ such that for all $n\in\mathbb{N}$ 
\begin{align*}
    \left\lvert\frac{\det df(f^nx)_{|df^n_xE(x)}}{\det df(f^nz)_{|df^n_zE(z)}}-1\right\rvert\leq C_0\Big(\alpha\big(df^n_xE(x)\big)+\alpha\big(df^n_yE(z)\big)+\omega\big(d(f^nx,f^nz)\big) \Big),
\end{align*}
where $\alpha(E(*))=d_{E^u(*)\oplus E^s(*)}\big( E^u(*),E(*)\big).$\\ 
Using lemma \ref{Contraction of the action of hyperbolic maps}, we have $\alpha\big(df^n_*E(*)\big)\leq \lambda^{2n}\alpha\big(E(*)\big),$ so we deduce that

\begin{equation*}
  \left\lvert\frac{\det df(f^nx)_{|df^n_xE(x)}}{\det df(f^nz)_{|df^n_zE(z)}}-1\right\rvert\leq C_0\Big(\lambda^{2n}\alpha\big(E(x)\big)+\lambda^{2n}\alpha\big(E(z)\big)+\omega\big(\lambda^nd(x,z)\big)  \Big),
\end{equation*}
in particular we have
\begin{align*}
    \frac{\det df^n(x)_{|E(x)}}{\det df^n(z)_{|E(z)}}&=\prod\limits_{k=0}^{n-1}\frac{\det df(f^kx)_{|df^k_xE(x)}}{\det df(f^kz)_{|df^k_zE(z)}}\\
    &\leq \prod\limits_{k=0}^{n-1}\Big(1+\lambda^{2k}+\omega(\lambda^k\epsilon)\Big)\leq C_1=C_1(\epsilon).
\end{align*}

Now, $f^k\big(P(z,\epsilon)\big)$ has diameter of order $\lambda^{n-k}$, 
so
\begin{equation*}
\frac{\det df^n(z)_{|E(z)}}{\det df^n(y)_{|E(y)}}=\prod\limits_{k=0}^{n-1}\frac{\det df^k(z)_{|df_z^kE(z)}}{\det df^k(x)_{|df_y^kE(x)}}\leq \prod\limits_{k=0}^{n-1}\big(1+\omega(\lambda^{n-k}\epsilon)\big)\leq C_2=C_2(\epsilon).
\end{equation*}
This proves the first part of the lemma. To prove the second part we use the fact that the action of $Df_x$ on $G_x^q$ 
is contracting ( lemma \ref{Contraction of the action of hyperbolic maps}). Indeed, we have for all $k\in\mathbb{N}:$
\begin{equation*}
    \frac{\det df^k(x)_{|E(x)}}{J^uf^k_x}=\prod\limits_{i=0}^{k-1}\frac{\det df(f^ix)_{|df_x^iE(x)}}{J^uf(f^ix)}\leq\prod\limits_{i=0}^{+\infty}\big(1+\lambda^{2i}d(E_x^u,E_x)\big)\leq C_3=C_3(\epsilon,E_x),
\end{equation*}
which finishes the proof of the second claim.
\end{proof}

\begin{lemma}[Bowen-Ruelle \cite{BowenRuelleAxiomAFlows}]\label{First volume lemma}
Let $\Lambda$ be an attractor of class $C^{1,\omega}$, where $\omega$ is a Dini summable modulus. Let $\epsilon>0,$ then there exist $C=C(\epsilon)$ such that for all $x\in\Lambda$ and $n\in\mathbb{N}$
\begin{equation}
  C^{-1}\cdot\frac{1}{J^uf^n(x)} \leq  vol^m\big(B_n(x,\epsilon)\big)\leq C\cdot\frac{1}{J^uf^n(x)},
\end{equation}

\end{lemma}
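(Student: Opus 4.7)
The plan is to combine a Fubini argument along the unstable foliation with the distortion estimate of Lemma~\ref{Distortion lemma} to reduce the volume of $B_n(x,\epsilon)$ to the product of a stable factor of size $c(\epsilon)$ and an unstable factor of size $\epsilon^{\dim E^u}/J^u f^n(x)$.

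First I set up a product structure near $x$. Since $\Lambda$ is an attractor, a small neighborhood $U$ of any $x\in\Lambda$ is foliated by pieces of local unstable manifolds: by Lemma~\ref{Regularity of unstable manifolds} these leaves are $C^{1,\omega}$, and Theorem~\ref{main theorem} provides a Dini-summable modulus for the distribution $E^u$ itself. Together with the $C^{1,\omega}$ regularity of the stable foliation (same argument applied to $f^{-1}$), this gives absolute continuity of the unstable holonomy with Jacobians uniformly bounded on pieces of diameter $\epsilon$. I parametrise the leaves of the unstable foliation through a local stable transversal $T^s_\epsilon(x)$ of radius $\sim\epsilon$.

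The next step is to compare $B_n(x,\epsilon)$ with a set of product form $\bigcup_{z\in T^s_\epsilon(x)} P_n(z)$, where
\begin{equation*}
P_n(z):=\{y\in W^u_{\mathrm{loc}}(z) : d(f^ky,f^kz)\leq\epsilon,\ 0\leq k< n\}.
\end{equation*}
Since stable distances contract under $f$, the stable extent of $B_n(x,\epsilon)$ remains of order $\epsilon$ uniformly in $n$, while on the unstable side the condition $d(f^kx,f^ky)\leq\epsilon$ is exactly what defines $P_n$. A Bowen-bracket argument shows that $B_n(x,\epsilon)$ is sandwiched between two such unions whose transversal disks have radii differing by a fixed multiplicative constant. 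Fubini, combined with absolute continuity of the holonomy, then gives
\begin{equation*}
\mathrm{vol}^m\bigl(B_n(x,\epsilon)\bigr) \asymp c(\epsilon)\cdot \mathrm{vol}^u\bigl(P_n(z_0)\bigr)
\end{equation*}
for any $z_0\in T^s_\epsilon(x)$, with implicit constants independent of $n$.

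Finally I estimate $\mathrm{vol}^u(P_n(z_0))$ via the distortion lemma. The map $f^n$ restricts to a $C^1$ diffeomorphism from $P_n(z_0)$ onto a piece of $W^u_\epsilon(f^n z_0)$ of diameter comparable to $\epsilon$, hence of unstable volume $\asymp\epsilon^{\dim E^u}$. A change of variables and the first inequality of Lemma~\ref{Distortion lemma} give
\begin{equation*}
\mathrm{vol}^u\bigl(P_n(z_0)\bigr) = \int_{f^n P_n(z_0)}\frac{1}{\det df^n\bigl(f^{-n}w\bigr)_{|E}}\,d\mathrm{vol}^u(w) \asymp \frac{\epsilon^{\dim E^u}}{\det df^n(x)_{|E(x)}},
\end{equation*}
and the second inequality of the same lemma replaces $\det df^n(x)_{|E(x)}$ by $J^u f^n(x)$. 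Combining this with the Fubini estimate yields the claim with $C=C(\epsilon)$. The main obstacle is the sandwiching step: one has to confirm that the stable and unstable widths of $B_n(x,\epsilon)$ are comparable to those of the model product set, with constants independent of $n$. This rests on the small-angle invariance of the unstable cone and on absolute continuity of the holonomy with uniformly bounded Jacobians; and it is precisely the absolute continuity that requires the Dini-summable regularity of $E^u$ furnished by Theorem~\ref{main theorem}, rather than mere continuity.
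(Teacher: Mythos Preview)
Your overall architecture---sandwich $B_n(x,\epsilon)$ between product sets, apply Fubini along an ``unstable'' foliation, and finish with the distortion lemma---is exactly the paper's strategy. But the implementation has a genuine gap at the very first step.

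You assert that, because $\Lambda$ is an attractor, a small neighborhood $U\subset M$ of $x\in\Lambda$ is foliated by pieces of local unstable manifolds. This is false in general. For an attractor one has $W^u_\epsilon(z)\subset\Lambda$ for every $z\in\Lambda$; the unstable leaves foliate $\Lambda$ locally, but $\Lambda$ typically has empty interior in $M$ (think of the solenoid), so they do not fill any open set of $M$. The dynamical ball $B_n(x,\epsilon)$, however, is open in $M$, so your Fubini decomposition $\bigcup_{z\in T^s_\epsilon(x)} P_n(z)$ with $P_n(z)\subset W^u_{\mathrm{loc}}(z)$ cannot cover it. Consequently the appeal to absolute continuity of the unstable holonomy is also moot: there is no unstable foliation of $U$ whose holonomy you could take. (Your argument would go through verbatim in the Anosov case; the paper's remark after Lemma~\ref{Distortion lemma} makes precisely this point.)

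The paper circumvents this by working in an adapted chart $\phi_x:T_xM(\epsilon)\to M$ and building an \emph{ad hoc}, $n$-dependent foliation of the full product box in $T_xM$. For each $v\in E^s_\epsilon(x)$ it sets
\[
P_n(v,\epsilon)=\bigl\{u\in T_xM:\ u_2=v,\ (F^ku)_1\in E^u_\epsilon(f^kx)\ \text{for }0\le k\le n-1\bigr\},
\]
and proves via a graph-transform estimate (Fact~1) that each $F^k\bigl(P_n(v,\epsilon)\bigr)$ is the graph of a $C^1$ map $E^u_\epsilon\to E^s$ with uniformly small slope. These graphs are tangent to the unstable cone but are \emph{not} pieces of any invariant unstable manifold; they exist for every $v$ in the stable ball, and together they sandwich $D_n(x,\epsilon)$ as in \eqref{foliating the dynamical ball}. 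This is what lets Fubini and Lemma~\ref{Distortion lemma} run without any global foliation or holonomy statement. If you replace your unstable-foliation step by this chart construction, the rest of your outline matches the paper's proof.
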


\begin{proof}
As in \cite{BowenRuelleAxiomAFlows}, consider for each $z\in\Lambda$ a local chart $\phi_z:T_zM(\epsilon)\to M$ such that $\phi_z\big(E^u_z)\big)\subset W^u(z)$ and $\phi_z\big(E^s_\epsilon(z)\big)\subset W^s(z)$ ($E^s_\epsilon(z)$ is the open ball whose center is the origin of $E^s_z$ and radius $\epsilon$) and such that the maps $F_z=\phi^{-1}_{fz}\circ f\circ\phi_z$ is tangent to $D_zf$ at the origin of $T_zM.$

For $x\in\Lambda,$ and $n\in\mathbb{N}$ consider the set:
\begin{equation*}
D_n(x,\epsilon)=\left\lbrace u\in T_xM:~ \|F^ku\|_{f^kx}\leq \epsilon,\text{  for } k=0,\cdots,n-1 \right\rbrace,    
\end{equation*}
then by definition of $D_n(x,\epsilon),$ there are $C_1,C_2>0$ independent of $n$ and such that
\begin{equation*}
    \phi_x\big(D_n(x,C_1\epsilon)\big)\subset B_n(x,\epsilon)\subset \phi_x\big( D_n(x,C_2\epsilon)\big).
\end{equation*}
So to estimate the volume of $B_n(x,\epsilon),$ it will be enough to estimate the volume of $D_n(x,\epsilon).$

For $u\in T_xM,$ let $(u_1,u_2)$ be the decomposition of $u$ with respect to the splitting $T_xM=E^u_x\oplus E^s_x$, then consider $v\in E^s_\epsilon(x)$ and define the set
\begin{equation*}
P_n(v,\epsilon)=\left\lbrace u\in T_xM=E^u_x\oplus E^s_x: u_2=v, (F^ku)_1\in E^u_\epsilon(f^kx) \text{ for } 0\leq k\leq n-1   \right\rbrace.
\end{equation*}

Let $K=K(\epsilon)>0$ such that for all $k\geq 0$ we have $\|F^kv\|\leq (K+1)\epsilon.$

\begin{fact}
For $\epsilon>0$ sufficiently small, there is $\gamma>0$ such that for all $x\in\Lambda, n\in\mathbb{N}$ and $k\in\lbrace 0,\cdots,n-1\rbrace,$ the set $F^k\big(P_v(\epsilon,n)\big)$ is a graph of a $C^1$ function $\psi_k:E^u_\epsilon(f^kx)\to E^s_{(K+2)\epsilon}(f^kx)$ such that $\|D\psi_k\|\leq \gamma.$
\end{fact}
Once we prove this fact, we deduce that:
\begin{equation}\label{foliating the dynamical ball}
    D_n(x,\epsilon)\subset \bigcup\limits_{v\in E^s_\epsilon(x)}P_\epsilon(v,n)\subset D_n\big(x,(K+3)\epsilon\big),
\end{equation}
so estimating the size of $P_\epsilon(v,n)$ together with the distortion lemma (lemma \ref{Distortion lemma}) finishes the proof. Indeed, using \eqref{foliating the dynamical ball} we get:
\begin{equation*}
    vol\big(D_n(x,\epsilon)\big)\leq vol\big(\bigcup\limits_{v\in E^s_\epsilon(x)}P_\epsilon(v,n)\big)\leq vol\Big(D_n\big(x,(K+3)\epsilon\big)\Big),
\end{equation*}
and by Fubini we get:
\begin{equation*}
    vol\big(\bigcup\limits_{v\in E^s_\epsilon(x)}P_\epsilon(v,n)\big)=\int_{E_\epsilon(x)}vol^q\big(P_n(v,\epsilon)\big)~d~vol^q(v),
\end{equation*}
then the distortion lemma implies that:
\begin{equation*}
    \frac{C^{-2}}{J^uf^n(x)}\leq vol^q\big(P_n(v,\epsilon)\big)\leq \frac{C^2}{J^uf^n(x)}.
\end{equation*}
\begin{proof}[Proof of Fact 1]
Fix $k\in\lbrace 0,\cdots,n-1\rbrace,$ and let $W$ be a graph of a $C^1$ map $\varphi:E^u_\epsilon(f^kx)\to E^s_{(K+2)\epsilon}(f^kx)$ with a Lipschitz constant $\gamma\leq 1$.

It is clear that for $\epsilon>0$ small enough, $F_*($Graph $\varphi)$ is a graph of $C^1$ function $\Tilde{\varphi}.$

If $u=(u_1,u_2) \in D_{n-k}(f^kx,\epsilon)$, write $F$ as:
\begin{equation}
    F(u_1,u_2)=\big(\Tilde{F}u_1+\alpha(u_1,u_2),\Tilde{F}u_2+\beta(u_1,u_2)\big),
\end{equation}
where $\Tilde{F}$ is $DF$ at the origin of $T_{f^kx}M,$ and $\|\alpha\|_{C^1},\|\beta\|_{C^1}<\delta(\epsilon),$ and $\delta(\epsilon)\to 0$ when $\epsilon\to 0.$

Let $(u_1',w_1'), (u_2',w_2')\in$ Graph $(\Tilde{\varphi}),$ then there is a point $(u_i,w_i)\in$ Graph $(\varphi)$ such that $F(u_i,w_i)=(u_i',w_i').$ So we deduce the following:
\begin{align*}
    \|w_2'-w_1'\|&=\|\Tilde{F}(w_2-w_1)+\beta(u_2,w_2)-\beta(u_1,w_1)\|\\
    &\leq \lambda\|w_2-w_1\|+\delta(\gamma+1)\|u_2-u_1\|\\
    &\leq \lambda\gamma\|u_2-u_1\|+\delta(\gamma+1)\|u_2-u_1\|\\
    &=\big(\lambda\gamma+\delta(\gamma+1)\big)\|u_2-u_1\|,
\end{align*}
we have also:
\begin{align*}
    \|u_2'-u_1'\|&=\|\Tilde{F}(u_2-u_1)+\alpha(u_2,w_2)-\alpha(u_1,w_1)\|\\
    &\geq \frac{1}{\lambda}\|u_2-u_1\|-\delta(\gamma+1)\|u_2-u_1\|\\
    &=\big(\frac{1}{\lambda}-\delta(\gamma+1)\big)\|u_2-u_1\|,
\end{align*}
so by choosing $\epsilon$ small enough we can take any $\gamma\leq 1,$ which finish the proof of the fact.
\end{proof}\end{proof}

The following lemma is a variation of the previous lemma. It provides a lower bound of the volume of a dynamical ball centered near the hyperbolic attractor, this is crucial to find a link between being an attractor and having zero pressure with respect to the geometric potential.  \cite{BowenRuelleAxiomAFlows}

\begin{lemma}[Bowen-Ruelle \cite{BowenRuelleAxiomAFlows}]\label{Second volume lemma}
For all small $\epsilon, \delta>0$ there is $d=d(\epsilon,\delta)>0$ such that for all $n\in\mathbb{N}, x\in\Lambda$ and $y\in B_n(x,\epsilon)$ we have::
$$vol^m\big(B_n(y,\delta)\big)\geq d\cdot vol^m\big(B_n(x,\epsilon)\big).$$

\end{lemma}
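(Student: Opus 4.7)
The plan is to reduce the comparison of volumes of dynamical balls to a comparison of unstable Jacobians at nearby orbits, and then to control that comparison via the distortion lemma, exactly in the spirit of the proof of the Bowen--Ruelle estimate in the Hölder setting.

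First, I apply Lemma \ref{First volume lemma} twice: at $x$ with radius $\epsilon$ to get the upper bound, and at $y$ with radius $\delta$ to get the lower bound. This yields constants $C_1 = C_1(\epsilon)$ and $C_2 = C_2(\delta)$ such that
\begin{align*}
vol^m\big(B_n(x,\epsilon)\big) &\leq \frac{C_1}{J^u f^n(x)}, \\
vol^m\big(B_n(y,\delta)\big) &\geq \frac{1}{C_2 \cdot J^u f^n(y)}.
\end{align*}
So the whole lemma reduces to producing a constant $C_3 = C_3(\epsilon)$ (independent of $n$, $x$, $y$) such that $J^u f^n(x)/J^u f^n(y) \leq C_3$ whenever $y \in B_n(x,\epsilon)$.

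To obtain this Jacobian comparison, I would apply the distortion lemma (Lemma \ref{Distortion lemma}). Fixing a foliation $\mathcal{F}_{x,n}$ of $B_n(x,\epsilon)$ tangent to the unstable cone field and whose tangent distribution has Dini summable modulus — such a foliation is available because by Theorem \ref{main theorem} and Lemma \ref{Regularity of unstable manifolds} the unstable distribution and local unstable manifolds themselves have the required regularity — inequality \eqref{Distortion} gives $\det df^n(x)_{|E(x)}/\det df^n(y)_{|E(y)} \leq C$, and inequality \eqref{Typical expansion for subspaces!} applied at both $x$ and $y$ converts each $\det df^n(\cdot)_{|E(\cdot)}$ factor into $J^u f^n(\cdot)$ up to a uniform constant. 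Combining, one gets $J^u f^n(x)/J^u f^n(y) \leq C_3(\epsilon)$.

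Chaining all three estimates gives
\begin{equation*}
vol^m\big(B_n(y,\delta)\big) \geq \frac{1}{C_2 \, J^u f^n(y)} \geq \frac{1}{C_2 C_3 \, J^u f^n(x)} \geq \frac{vol^m\big(B_n(x,\epsilon)\big)}{C_1 C_2 C_3},
\end{equation*}
so the conclusion holds with $d(\epsilon,\delta) := (C_1 C_2 C_3)^{-1}$. The only point that requires any care is verifying that the distortion lemma applies uniformly along the orbit — i.e.\ that the foliation $\mathcal{F}_{x,n}$ can be chosen with constants independent of $n$ — which is guaranteed precisely because the Dini summability of the modulus of continuity of $E^u$ makes the telescoping product in the proof of Lemma \ref{Distortion lemma} converge to a constant depending only on $\epsilon$. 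Everything else is a clean concatenation of the two volume/distortion estimates already established.
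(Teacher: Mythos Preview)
There is a genuine gap: you are implicitly assuming $y\in\Lambda$, but the whole point of this lemma is that $y$ is an arbitrary point of the dynamical ball $B_n(x,\epsilon)\subset M$ and need \emph{not} lie on the hyperbolic set. Both tools you invoke at $y$ break down in that case. Lemma~\ref{First volume lemma} is stated and proved only for centers in $\Lambda$ (its conclusion involves $J^uf^n$, which is the Jacobian along $E^u$, a bundle defined only over $\Lambda$), so you cannot write $vol^m\big(B_n(y,\delta)\big)\geq C_2^{-1}/J^uf^n(y)$. Likewise, inequality~\eqref{Typical expansion for subspaces!} compares the foliation Jacobian with $J^uf^n$ at a point of $\Lambda$, so it cannot be applied at $y$ to convert $\det df^n(y)_{|E(y)}$ into $J^uf^n(y)$. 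Your argument therefore proves exactly the case the paper dismisses as ``obvious by the previous lemma''.

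The paper's proof addresses precisely this obstruction. When $y\notin\Lambda$ it uses that $\Lambda$ is an attractor, so $W^s_\epsilon(\Lambda)$ is a neighborhood of $\Lambda$ and one can pick $z\in\Lambda$ with $y\in W^s_\epsilon(z)$; the Bowen bracket $A=[x,z]\in\Lambda$ then lets one replace $x$ by a point of $\Lambda$ lying on the same stable leaf as $y$. After this reduction one still cannot invoke Lemma~\ref{First volume lemma} directly at $y$; instead one rebuilds the plaque decomposition $\bigcup_{v}P_n(v,\delta)\subset D_n(y,\delta)$ centered at $y$ and applies the distortion lemma to those plaques. In short, the missing idea in your proposal is the passage through a nearby point of $\Lambda$ via the local stable lamination, together with a direct lower bound for $vol^m\big(B_n(y,\delta)\big)$ that does not go through $J^uf^n(y)$.
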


\begin{proof}
If $y\in\Lambda,$ then the inequality of the lemma is obvious by the previous lemma. Assume that $y\notin\Lambda.$ Since $W^s_\epsilon(\Lambda)$ is a neighborhood of $\Lambda$ (because $\Lambda$ is an attractor) there is $z\in\Lambda$ such that $y\in W^s_\epsilon(z),$ and since $y\in B_n(x,\epsilon)$ we have $z\in B_n(x,2\epsilon).$ Let $A=[x,z]=W^u_\epsilon(x)\cap W^s_\epsilon(z).$ By the shadowing lemma and the expansiveness of $f$ in $\Lambda$ the point $A$ belongs to $\Lambda.$

By construction, $A\in W^u_\epsilon(x)\cap B_n(x,3\epsilon),$ so by the previous lemma the volume of $B_n(x,\epsilon)$ and $B_n(A,\delta)$ are proportional independently of $n,$ so in order to prove this lemma, it will be enough to compare the volume of $B_n(y,\delta)$ and the volume of $B_n(A,\epsilon).$

By this remark, we may assume in that $y\in W^s_\epsilon(x).$ Using the same argument as in the previous lemma, we prove similarly that $\bigcup\limits_{v\in W^s_\delta(y)}P_n(y,\delta)\subset D_n(y,\delta),$ then using distortion lemma, we get the desired inequality.

\end{proof}


The following lemma is proven in Chapter 4 Lemma 4.9 of \cite{bowen1975equilibrium}.
\begin{lemma}\label{relation unstale piece Attractor}
Let $\Lambda$ be a hyperbolic set of a $C^1$ diffeomorphism. If $W^u_\epsilon(x)\subset \Lambda$ for some $x,$ then $\Lambda$ is an attractor. If $\Lambda$ is not an attractor, then there exists $\gamma>0$ such that for every $x\in\Lambda,$ there is $y\in W^u_\epsilon(x)$ with $d(y,\Lambda)\geq \gamma.$
\end{lemma}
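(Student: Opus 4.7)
The plan is to prove the two halves in sequence, each by a soft argument built on local unstable manifolds and local product structure. Throughout I will (implicitly) reduce to the case where $\Lambda$ is a basic set, as in the spectral decomposition quoted earlier, so that topological transitivity is available; the notion of attractor was only introduced for basic sets anyway. The central observation is that the hypothesis $W^u_\epsilon(x)\subset\Lambda$ is very rigid: it cascades to all of $\Lambda$ and, once it holds everywhere, exhibits $\Lambda$ as an attractor in a completely structural way.

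For the first half, start from a single $x_0\in\Lambda$ with $W^u_\epsilon(x_0)\subset\Lambda$. Because $f$ expands unstable leaves, $f(W^u_\epsilon(x_0))\supset W^u_\epsilon(fx_0)$, and combining with $f$-invariance of $\Lambda$ yields $W^u_\epsilon(f^n x_0)\subset\Lambda$ for every $n\geq 0$. By transitivity, the positive orbit of $x_0$ is dense in $\Lambda$, and since $y\mapsto W^u_\epsilon(y)$ is continuous (Perron-Hadamard) while $\Lambda$ is closed, this inclusion extends to every $y\in\Lambda$. Now set
\begin{equation*}
V=\bigcup_{y\in\Lambda} W^s_\epsilon(y).
\end{equation*}
Using local product structure in $M$ near each $y\in\Lambda$, any point of $M$ sufficiently close to $y$ can be written as $W^s_\delta(z)\cap W^u_\delta(y)$ for some $z\in W^u_\epsilon(y)$; since $W^u_\epsilon(y)\subset\Lambda$, the base $z$ lies in $\Lambda$, so this point lies in $V$. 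Hence $V$ is an open neighborhood of $\Lambda$. Finally $f(W^s_\epsilon(y))\subset W^s_\epsilon(fy)$ yields $f(V)\subset V$, which is exactly the attractor condition.

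For the second half, I would argue by contrapositive: assume for every $\gamma>0$ there exists $x\in\Lambda$ with $W^u_\epsilon(x)\subset\{p:d(p,\Lambda)<\gamma\}$, and deduce that $\Lambda$ is an attractor. Pick $\gamma_n\to 0$ with corresponding $x_n\in\Lambda$; by compactness extract $x_n\to x_\infty\in\Lambda$. Continuity of local unstable manifolds (in the Hausdorff or $C^1$ topology) yields $W^u_\epsilon(x_n)\to W^u_\epsilon(x_\infty)$. Every $p\in W^u_\epsilon(x_\infty)$ is then a limit of points $p_n\in W^u_\epsilon(x_n)$ with $d(p_n,\Lambda)<\gamma_n\to 0$, so $d(p,\Lambda)=0$, giving $W^u_\epsilon(x_\infty)\subset\Lambda$ since $\Lambda$ is closed. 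By the first half, $\Lambda$ is an attractor, contradicting the hypothesis.

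The main obstacle, really the only nontrivial step, is the claim that $V=\bigcup_{y\in\Lambda}W^s_\epsilon(y)$ is open in $M$. This is the place where the argument uses that local product structure holds in $M$ and not merely inside $\Lambda$, which is standard for hyperbolic sets but needs to be invoked carefully; everything else (iteration to spread the inclusion, continuity of local unstable manifolds, and the final forward-invariance of $V$) is routine.
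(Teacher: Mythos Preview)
The paper does not prove this lemma at all; it simply quotes it from Bowen's monograph (Chapter~4, Lemma~4.9 of \cite{bowen1975equilibrium}). Your sketch is essentially Bowen's classical argument, and the second half (the contrapositive via compactness and continuity of $y\mapsto W^u_\epsilon(y)$) is correct as written.

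There is, however, a genuine gap in the first half. From topological transitivity of $f|_\Lambda$ you \emph{cannot} conclude that the forward orbit of your particular point $x_0$ is dense: transitivity only guarantees the existence of \emph{some} point with dense orbit, and a basic set is full of periodic points whose orbits are finite. Consequently, the set $A=\{y\in\Lambda:W^u_\epsilon(y)\subset\Lambda\}$, which you correctly show to be nonempty, closed, and forward-invariant, need not equal $\Lambda$ on those grounds alone --- any periodic orbit is a closed forward-invariant proper subset of a transitive system. The standard repair is to show that $A$ has nonempty interior in $\Lambda$: for $z\in\Lambda$ close to $x_0$, the stable holonomy carries a piece of $W^u_\epsilon(x_0)\subset\Lambda$ onto $W^u_{\epsilon'}(z)$, and local product structure then forces $W^u_{\epsilon'}(z)\subset\Lambda$; since a neighborhood of $x_0$ in $\Lambda$ thus has small unstable disks inside $\Lambda$, one can now run your density-plus-closedness argument starting from a genuinely transitive point (which must enter that neighborhood) rather than from $x_0$ itself. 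Equivalently, pick a periodic point $p$ near $x_0$, transfer the inclusion to $W^u_{\epsilon'}(p)$ by holonomy, and use periodicity of $p$ to iterate $\epsilon'$ up to $\epsilon$ and then spread via local product structure. Either route closes the gap; the rest of your argument for the openness of $V=\bigcup_{y\in\Lambda}W^s_\epsilon(y)$ and its forward-invariance is fine.
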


The proof of the following proposition and corollary 2 is the same as Theorems 4.11 and 4.12 of \cite{bowen1975equilibrium}. For convenience, we sketch the proofs.
\begin{proposition}\label{Attractor different formulation}
Let $f:\Lambda\to \Lambda$  be a transitive hyperbolic map of class $C^{1,\omega}$, where $\omega$ is Dini summable modulus, then the following are equivalent:
\begin{itemize}

    \item[(i)]  $\Lambda$ is an attractor.
    
    \item[(ii)]  $vol^m\big(W^s(\Lambda)\big)>0.$
    
    \item[(iii)] $P_{f_{|\Lambda}}(\phi^{(u)})=0.$
    
\end{itemize}
\end{proposition}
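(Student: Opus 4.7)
The plan is to verify all three implications by combining the two volume lemmas with the escape/density arguments of Bowen. The trivial direction (i) $\Rightarrow$ (ii) follows because an attractor has a trapping neighborhood $U$ with $f(\overline U)\subset U$ and $\bigcap_n f^n(U)=\Lambda$, so every point of $U$ is attracted to $\Lambda$ and $U\subset W^s(\Lambda)$, giving $\text{vol}^m(W^s(\Lambda))>0$.

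For the equivalence (ii) $\Leftrightarrow$ (iii), set $V_n^\delta=\{y:d(f^k y,\Lambda)<\delta,\ 0\leq k\leq n\}$ and $V_\infty^\delta=\bigcap_n V_n^\delta$. Two comparisons are central: first, by shadowing, every orbit segment in $V_n^\delta$ is $C\delta$-shadowed by an orbit in $\Lambda$, so $V_n^\delta\subset\bigcup_{x\in E_n}B_n(x,\epsilon+C\delta)$ for a maximal $(n,\epsilon)$-separated $E_n\subset\Lambda$; second, for a maximal $(n,2\epsilon)$-separated set the dynamical balls $B_n(x,\epsilon)$ are pairwise disjoint, contained in $V_n^\epsilon$, and Lemma \ref{First volume lemma} makes their volumes comparable to $(J^u f^n(x))^{-1}$. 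The universal bound $P(\phi^{(u)})\leq 0$ follows from the second comparison applied inside $U_\epsilon(\Lambda)$. For $\text{(ii)}\Rightarrow P(\phi^{(u)})\geq 0$, the identity $W^s(\Lambda)=\bigcup_{N\geq 0}f^{-N}(V_\infty^\delta)$ (attracted orbits eventually enter $U_\delta(\Lambda)$ for good) combined with the fact that $f$ preserves Lebesgue null sets forces $\text{vol}^m(V_\infty^\delta)>0$; hence $\text{vol}^m(V_n^\delta)\geq\text{vol}^m(V_\infty^\delta)>0$ and the first comparison gives $\sum_{x\in E_n}(J^u f^n(x))^{-1}\geq c>0$, so $P(\phi^{(u)})\geq 0$. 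For $\text{(iii)}\Rightarrow\text{(ii)}$, the uniqueness of the equilibrium state in Corollary 1 together with the Ruelle--Perron--Frobenius theory of \cite{FanJiang} yields the sharp partition-function asymptotic $\sum_{x\in E_n}(J^u f^n(x))^{-1}\to C>0$ when $P(\phi^{(u)})=0$; the second comparison then gives $\text{vol}^m(V_n^\epsilon)\geq c'>0$, so $\text{vol}^m(V_\infty^\epsilon)>0$, and $V_\infty^\epsilon\subset W^s(\Lambda)$ by the shadowing argument above.

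For $\text{(ii)}\Rightarrow\text{(i)}$, I use the absolute continuity of the stable foliation (deduced from Theorem \ref{main theorem} applied to both $f$ and $f^{-1}$ plus the Perron--Hadamard construction of the leaves): positivity of $\text{vol}^m(W^s(\Lambda))$ forces $\text{vol}^u(W^u_\epsilon(x_0)\cap W^s(\Lambda))>0$ for some $x_0\in\Lambda$, where $\text{vol}^u$ denotes Lebesgue measure on the unstable disk. Expansiveness of $f_{|\Lambda}$ forces this intersection to equal $W^u_\epsilon(x_0)\cap\Lambda$ (a point of the intersection has bounded two-sided orbit and is near $\Lambda$, hence lies in $\Lambda$). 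Pick a Lebesgue density point $z\in W^u_\epsilon(x_0)\cap\Lambda$; iterating $f$ forward and using the distortion Lemma \ref{Distortion lemma} together with uniform unstable expansion, the local density at $z$ is promoted to full density at a fixed macroscopic unstable scale around $f^n z\in\Lambda$. Transitivity of $f_{|\Lambda}$ and closedness of $\Lambda$ then provide $w\in\Lambda$ and $\delta>0$ with $W^u_\delta(w)\subset\Lambda$, and the first part of Lemma \ref{relation unstale piece Attractor} concludes that $\Lambda$ is an attractor.

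The main obstacle is the step $\text{(ii)}\Rightarrow\text{(i)}$: both the absolute continuity of the stable foliation and the density-inflation argument crucially require the Dini summability of $\omega$ via the distortion lemma, and both fail in the Bowen horseshoe of Section 3, which precisely accounts for why (ii) holds there without (i). The equivalence $\text{(ii)}\Leftrightarrow\text{(iii)}$ is by contrast a bookkeeping exercise combining shadowing, expansiveness, the volume lemmas, and the partition-function asymptotic furnished by Corollary 1.
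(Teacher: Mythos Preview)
Your argument for (i)$\Rightarrow$(ii) and for the inequality $P(\phi^{(u)})\leq 0$ matches the paper. The divergence is in how the cycle is closed.

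The paper does \emph{not} prove (ii)$\Rightarrow$(i) directly, nor (iii)$\Rightarrow$(ii). It proves (iii)$\Rightarrow$(i) by contraposition: if $\Lambda$ is not an attractor, Lemma~\ref{relation unstale piece Attractor} supplies, for every $x\in\Lambda$, a point of $W^u_\epsilon(x)$ at distance $\geq\gamma$ from $\Lambda$. One then iterates forward, finds for each $x$ in an $(n,\gamma)$-separated set a nearby $y(x,n)\in B_n(x,\gamma/4)$ whose orbit escapes at time $n+N$, and invokes the Second volume lemma (Lemma~\ref{Second volume lemma}) to get
\[
\mathrm{vol}^m\big(B_{n+N}(\mathcal S,\gamma/2)\big)\leq (1-d)\,\mathrm{vol}^m\big(B_n(\mathcal S,\gamma/2)\big),
\]
which forces $P(\phi^{(u)})\leq \frac{1}{N}\log(1-d)<0$. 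The Second volume lemma is the key input here, and you never use it.

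Your route (ii)$\Rightarrow$(i) via absolute continuity of the stable foliation is a legitimate alternative strategy in spirit, but as written it has a gap: absolute continuity of $W^s$ is \emph{not} a consequence of Theorem~\ref{main theorem}. That theorem gives a Dini modulus for the distribution $E^u$ (and by symmetry $E^s$), which controls the \emph{tangent} regularity of the leaves; absolute continuity is a statement about the Jacobian of the holonomy between transversals, and requires a separate bounded-distortion argument along infinite orbits (the Anosov--Sinai argument). One can likely carry this out under the Dini condition using the same summation estimates that underlie Lemma~\ref{Distortion lemma}, but this is substantial additional work that the paper deliberately sidesteps by using the volume-lemma route. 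Likewise your (iii)$\Rightarrow$(ii) detour through a sharp RPF partition-function limit is unnecessary once (iii)$\Rightarrow$(i) is in hand.
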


\begin{proof}

$(i) \Rightarrow (ii)$ This implication is in fact true if $f$ is only $C^1$. Indeed we have $W^s(\Lambda)=\bigcup\limits_{x\in\Lambda}W^s(x),$ which implies that $W^s(\Lambda)$ is a neighborhood of $\Lambda.$

$(ii) \Rightarrow (iii)$ Define $s(\epsilon,n)$ by:
\begin{equation*}
    s(\epsilon,n)=\sup\limits_{\mathcal{S}\in\mathcal{S}_{\epsilon,n}}\sum\limits_{x\in\mathcal{S}}e^{S_n\phi^{(u)}(x)}=\sup\limits_{\mathcal{S}\in\mathcal{S}_{\epsilon,n}}\sum\limits_{x\in\mathcal{S}}\frac{1}{J^uf(x)},
\end{equation*}
where $S_{\epsilon,n}$ is the set of $(\epsilon,n)-$separated sets of $\Lambda.$ Using lemma \ref{First volume lemma} we have for all $\mathcal{S}\in S_{\epsilon,n}$: 
\begin{align*}
    s(\epsilon,n) &\geq C_{\epsilon}^{-1}\sum\limits_{x\in\mathcal{S}}vol^m\big(B_n(x,\epsilon)\big)\\
    &\geq C_\epsilon^{-1} vol^m\big(\bigcup\limits_{x\in\mathcal{S}}B_n(x,\epsilon)\big)\\
    &\geq C_\epsilon^{-1} vol^m\big(W^s_{\epsilon/2}(\Lambda)\big),
\end{align*}
which implies that:
\begin{equation*}
    P_{f|\Lambda}(\phi^{(u)})=\lim\limits_{\epsilon\to 0}\lim\limits_{n\to +\infty}\frac{1}{n}\log s(\epsilon,n)\geq 0
\end{equation*}
Similarly we have:
\begin{align*}
    s(\epsilon,n)&\leq C_\epsilon\sum\limits_{x\in\mathcal{S}}vol^m\big(B_n(x,\epsilon)\big)\label{Technical stuff to prove P<0}\\
    &\leq C_\epsilon C_{\epsilon/2} \sum\limits_{x\in\mathcal{S}}vol^m\big(B_n(x,\epsilon/2)\big)\\
    &\leq C_\epsilon C_{\epsilon/2} vol^m\big(\bigcup\limits_{x\in\mathcal{S}}B_n(x,\epsilon/2)\big)\\
    &\leq C_\epsilon C_{\epsilon/2} vol^m\big(W^s_\epsilon(\Lambda))
\end{align*}
where $\mathcal{S}$ is $(\epsilon,n)-$separated set, and the third inequality follows from the fact that $B_n(x,\epsilon/2)$ where $x$ varies in $\mathcal{S}$ are disjoint. Hence we deduce that \begin{equation*}
    P_{f|\Lambda}(\phi^{(u)})\leq 0.
\end{equation*}

$(iii) \Rightarrow (i)$ Assume that $\Lambda$ is not an attractor. We will prove that the pressure is negative. Let $\epsilon>0$ small, and choose $\gamma>0$ as in lemma \ref{relation unstale piece Attractor}. Let $N\in\mathbb{N}$ such that 
$$W^u_\epsilon(f^Nx)\subset f^N(W^u_{\gamma/4}), \forall x\in\Lambda.$$
Let $\mathcal{S}\subset \Lambda$ be $(\gamma,n)-$separated. Using lemma \ref{relation unstale piece Attractor}, there is a point $y(x,n)\in B_n(x,\gamma/4)$ such that
$$d(f^{n+N}y(x,n),\Lambda)>\gamma.$$
Choose $\delta\in(0,\gamma/4)$ so that $d(f^Nz,f^Ny)<\gamma/2$ whenever $d(z,y)<\delta.$ Then $B_n(y(x,n),\delta)\subset B_n(x,\gamma/2),$ and $f^{n+N}B_n(y(x,n),\delta)\cap B(\Lambda,\gamma/2)=\emptyset.$

So we deduce that $B_n(y(x,n),\delta)\cap B_{n+N}(\mathcal{S},\gamma/2)=\emptyset.$ Using lemma \ref{Second volume lemma} we get
\begin{align*}
    vol^m\big(B_n(\mathcal{S},\gamma/2)\big)-vol^m(B_{n+N}\big(\mathcal{S},\gamma/2)\big)&\geq\sum\limits_{x\in\mathcal{S}}vol^m(B_n(y(x,n),\delta))\\
    &\geq d(3\gamma/2,\delta)\sum\limits_{x\in\mathcal{S}}vol^m(B_n(x,3\gamma/2))\\
    &\geq d(3\gamma/2,\delta)vol^m(B_n(\mathcal{S},\gamma/2),
\end{align*}
so we get for all $n>N$:
$$vol^m(B_{n+N}(\mathcal{S},\gamma/2))\leq (1-d)vol^m(B_n(\mathcal{S},\gamma/2)),$$
finally, using the upper bound of $s(n,\epsilon)$, we deduce that: $$P_{f|\Lambda}(\phi^{(u)})\leq \frac{1}{N}\log(1-d)<0.$$
\end{proof}

The following lemma is proven in \cite{bowen1975equilibrium}.
\begin{lemma}\label{Quasi-Gibbs property}
Let $\phi:\Lambda\to\mathbb{R}$ be a $C^{0,\omega}$ potential, where $\omega$ is Dini summable, and $P=P_{f|\Lambda}(\phi)$ the pressure of $f$ restricted to $\Lambda.$ Then for small $\epsilon>0$ there is $b_\epsilon>0$ such that for any $x\in\Lambda$ and $n\in\mathbb{N}$ we have:
\begin{equation}\label{Quasi Gibbs property-nequality}
\mu_{\phi}(B_n(x,\epsilon))\geq b_\epsilon\exp(-Pn+S_n\phi(x)).    
\end{equation}
\end{lemma}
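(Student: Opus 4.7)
The plan is to transfer the Gibbs property from a symbolic model down to $\Lambda$ via a Markov partition, following the classical Bowen strategy adapted to the Dini-summable setting. I would fix a Markov partition $\{R_1,\dots,R_N\}$ of $\Lambda$ whose elements have diameter less than $\epsilon$; this produces a subshift of finite type $(\Sigma_A,\sigma)$ and a Hölder semiconjugacy $\pi:\Sigma_A\to\Lambda$ with $\pi\circ\sigma=f\circ\pi$. Setting $\tilde\phi=\phi\circ\pi$, the Hölder regularity of $\pi$ (say with exponent $\alpha$) composed with the Dini-summable modulus $\omega$ of $\phi$ gives a modulus of the form $t\mapsto\omega(Ct^\alpha)$ for $\tilde\phi$, which is again Dini summable: the substitution $u=Ct^\alpha$ transforms $\int_0^1\omega(Ct^\alpha)/t\,dt$ into $(1/\alpha)\int_0^C\omega(u)/u\,du<+\infty$. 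The adapted Ruelle-Perron-Frobenius theorem of \cite{FanJiang} then produces the equilibrium state $\tilde\mu$ of $(\sigma,\tilde\phi)$ and supplies a Gibbs lower bound: there is $b=b(\epsilon)>0$ such that
$$\tilde\mu\big([a_0\dots a_{n-1}]\big)\;\geq\;b\,\exp\!\big(-Pn+S_n\tilde\phi(\tilde y)\big)$$
for every $n$-cylinder and every $\tilde y$ in it.

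Given $x\in\Lambda$ and $n\geq 1$, I would pick any preimage $\tilde x\in\pi^{-1}(x)$ and let $C_n=[\tilde x_0\dots\tilde x_{n-1}]$. For $\tilde y\in C_n$ and $0\leq k\leq n-1$, the points $f^k(\pi\tilde y)$ and $f^kx=\pi(\sigma^k\tilde x)$ both lie in the partition element $R_{\tilde x_k}$ and are therefore within $\epsilon$ of each other, so $\pi(C_n)\subset B_n(x,\epsilon)$. Since $\mu_\phi=\pi_*\tilde\mu$ and $S_n\tilde\phi(\tilde x)=\sum_{k=0}^{n-1}\phi(f^k x)=S_n\phi(x)$ exactly, we obtain
$$\mu_\phi\big(B_n(x,\epsilon)\big)\;\geq\;\tilde\mu\big(\pi^{-1}\pi(C_n)\big)\;\geq\;\tilde\mu(C_n)\;\geq\;b\,\exp\!\big(-Pn+S_n\phi(x)\big),$$
which establishes the lemma with $b_\epsilon=b$.

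The main obstacle sits upstream, in securing the Gibbs estimate on the shift. In the classical Hölder setting this comes from the Ruelle-Perron-Frobenius theorem via the bounded distortion sum $\sum_{k\geq0}|\tilde\phi(\sigma^k\tilde y_1)-\tilde\phi(\sigma^k\tilde y_2)|\leq C$ for $\tilde y_1,\tilde y_2$ in a common cylinder, which converges thanks to exponential contraction of cylinders; in the current low-regularity setting it is precisely the Dini summability of $\tilde\phi$ that keeps this sum convergent, exactly as exploited in the proof of Lemma \ref{Distortion lemma}. This extension of Ruelle's theorem is invoked here as a black box from \cite{FanJiang}. A secondary technical point is the reduction via cohomology to a one-sided expanding shift (the framework in which \cite{FanJiang} operates), but this is standard and already handled in the proof of the corollary above, using \cite{Polli1990Zeta}.
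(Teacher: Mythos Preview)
Your proposal is correct and is exactly the route the paper relies on: the paper does not give its own proof of this lemma but simply cites Bowen's book \cite{bowen1975equilibrium}, and Bowen's argument there is precisely the Markov-partition transfer you describe, with the Gibbs property on the symbolic side now supplied by Fan--Jiang's Ruelle--Perron--Frobenius theorem \cite{FanJiang} in place of the classical H\"older version. The two technical points you flag (Dini summability of $\phi\circ\pi$ and the reduction to the one-sided shift via a bounded coboundary) are handled just as you indicate and are already invoked earlier in the paper in the proof of Corollary~1.
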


\begin{proof}[Proof of theorem \ref{Physic}:]
Let $g:U\to \mathbb{R}$ be a continuous function. Put $\bar{g}(n,x)=\frac{1}{n}\sum\limits_{k=0}^{n-1}g(f^kx)$ and $\bar{g}=\int g~d\mu_{\phi^{(u)}}.$ Fix a small $\delta>0,$ and consider the sets
\begin{equation*}
    C_n(g,\delta)=\lbrace x\in M~|~|\bar{g}(n,x)-\bar{g}|>\delta\rbrace,\\
   \quad B(g,\delta)=\bigcap\limits_{N=1}^{\infty}\bigcup\limits_{n=N}^{\infty}C_n(g,\delta).
\end{equation*}
We want to prove that for all $\delta>0$ the volume of $B(g,\delta)$ is zero, which proves the physicality.
Take $\epsilon>0,$ such that we have $d(gx,gy)<\delta$ whenever $d(x,y)<\epsilon,$ then fix $N$ and choose $\mathcal{R}_N,\mathcal{R}_{N+1},\cdots$ as follow:
$\mathcal{R}_n$ is a maximal subset of $\Lambda\cap C_n(g,2\delta),$ satisfying:
\begin{itemize}
    \item $B_n(x,\epsilon)\cap B_k(y,\epsilon)=\emptyset$ for $x\in\mathcal{R}_n,y\in\mathcal{R}_k$, and $N\leq k<n,$
    \item $B_n(x,\epsilon)\cap B_n(x',\epsilon)=\emptyset$ for $x,x'\in\mathcal{R}_n$ and $x\neq x'.$
\end{itemize}

Let $V_N=\bigcup\limits_{k=N}^{\infty}\bigcup\limits_{x\in\mathcal{R}_k} B_k(x,\epsilon),$ which is a disjoint union by definition of $(\mathcal{R}_n)_n.$ We have $B_k(x,\epsilon)\subset C_k(g,\delta)$ so $V_N\subset\bigcup\limits_{k=N}^{\infty}C_k(g,\delta).$

Since $\mu_{\phi^{(u)}}$ is ergodic, we have $$0=\mu_{\phi^{(u)}}(B(g,\delta))=\lim\limits_{n\to\infty}\mu_{\phi^{(u)}}(\bigcup\limits_{n=N}^{\infty}C_n(g,\delta)),$$ which implies that
\begin{equation}\label{Birkhoof implies V_N goes to 0}
    \lim\limits_{N\to\infty}\mu_{\phi^{(u)}}(V_N)=0.
\end{equation}
So using the fact that $P_{f|\Lambda}(\phi^{(u)})=0$ and lemma $\ref{Quasi-Gibbs property}$ we get
\begin{equation}\label{Gibbs property to prove observability}
    \mu_{\phi^{(u)}}(V_N)\geq b_\epsilon\sum\limits_{k=N}^{\infty}\sum\limits_{x\in\mathcal{R}_k}\exp(S_k\phi^{(u)}(x)).
\end{equation}

Now for $x\in\Lambda$ and $y\in W^s_\epsilon(x)\cap C_n(g,3\delta)$ we have $x\in C_n(g,2\delta),$ so in particular:
\begin{equation*}
    W^s_\epsilon(\Lambda)\cap\bigcup\limits_{k=N}C_k(g,3\delta)\subset \bigcup\limits_{k=N}^{\infty}\bigcup\limits_{x\in\mathcal{R}_k}B_k(x,2\epsilon),
\end{equation*}
so using lemma \ref{First volume lemma} we deduce that:
\begin{equation}\label{second part to prove observability}
    vol^m(W^s_\epsilon(\Lambda)\cap\bigcup\limits_{k=N}^\infty C_n(g,3\delta))\leq C_{2\epsilon}\sum\limits_{k=N}^\infty\sum\limits_{x\in\mathcal{R}_k}\exp(S_k\phi^{(u)}(x)).
\end{equation}
Finally, using (\ref{Birkhoof implies V_N goes to 0}),(\ref{Gibbs property to prove observability}) and (\ref{second part to prove observability}) we deduce that $vol(B(g,\delta)\cap W^s_\epsilon)=0$ which ends the proof.
\end{proof}

\bibliographystyle{plain}
\bibliography{bibliography.bib}

\end{document}